\newcommand{\darcr}[3]
{\draw (#1) edge [<-, bend right=20] node[midway, fill=white, inner sep=1 pt,font = \tiny]{$#2$} (#3); 
\draw[ultra thick] (#3)edge [bend right=20]  (#1);}
\definecolor{mycyan}{RGB}{0, 255, 255}
\definecolor{mymagenta}{RGB}{255,0, 255}
\theoremstyle{plain}
\newtheorem{theorem}{Theorem}
\newtheorem{prop}[theorem]{Proposition}
\newtheorem{lemma}[theorem]{Lemma}
\newtheorem{corollary}[theorem]{Corollary}
\theoremstyle{definition}
\newtheorem{example}{Example}
\theoremstyle{remark}
\newtheorem{remark}{Remark}
\newcommand{\be}{\begin{enumerate}}
\newcommand{\ee}{\end{enumerate}}
\newcommand{\mc}[1]{\mathcal{#1}}
\newenvironment{matr}[1]{\left( \begin{array}{#1}}{\end{array}\right)}
\title[Geometric $(n_{k})$ configurations exist for all $n$]{ 
Eventually, geometric $(n_{k})$ configurations\\ exist for all $n$}
\author[L.W. Berman]{Leah Wrenn Berman}
\author[G. G\'evay]{G\'{a}bor G\'{e}vay}
\author[T. Pisanski]{Toma\v{z} Pisanski}
\address[1]{Department of Mathematics and Statistics, University of Alaska Fairbanks, Fairbanks, AK, USA}
\address[2]{Bolyai Institute, $\!$University of Szeged, Szeged, $\!$Hungary \newline ORCID ID: https://orcid.org/0000-0002-5469-5165}
\address[3]{University of Primorska, Koper, Slovenia, and 
Institute of Mathematics, Physics and Mechanics, University of Ljubljana, Ljubljana, Slovenia \newline ORCID ID: https://orcid.org/0000-0002-1257-5376 }
\date{\today}							
\begin{document}

\begin{abstract}
In a series of papers and in his 2009 book on configurations  Branko Gr\"unbaum described a sequence of operations 
to produce new $(n_{4})$ configurations from various input configurations. These operations were later called the 
``Gr\"unbaum Incidence Calculus''. We generalize two of these operations to produce operations on arbitrary $(n_{k})$ 
configurations. Using them, we show that for any $k$ there exists an integer $N_k$ such that for any $n \geq N_k$ 
there exists a geometric $(n_k)$ configuration. We use empirical results for $k = 2, 3, 4$, and some more detailed 
analysis to improve the upper bound for larger values of $k$.
\end{abstract}

\maketitle

\textbf{Keywords:} axial affinity, geometric configuration, Gr\"{u}nbaum calculus

\textbf{MSC (2020):} 51A45,  51A20, 05B30, 51E30
\bigskip

\begin{center}
\fontsize{12}{14}{\sc
In memory of Branko Gr\"unbaum}
\end{center}
\bigskip

\section{Introduction}

In a series of papers and in his 2009 book on configurations~\cite{Gru2009b}, Branko Gr\"unbaum described a sequence 
of operations to produce new $(n_{4})$ configurations from various input configurations. These operations were later 
called the ``Gr\"unbaum Incidence Calculus''~\cite[Section 6.5]{PisSer2013}. Some of the operations described by 
Gr\"unbaum are specific to producing 3- or 4-configurations. Other operations can be generalized in a straightforward 
way to produce $(n_{k})$ configurations from either smaller $(m_{k})$ configurations with certain properties, or from 
$(m_{k-1})$ configurations. Let $N_{k}$ be the smallest number such that for any $n, n \geq N_k$ there exists a geometric $(n_k)$ configuration.
For $k = 2$ and $k = 3$, the exact value of $N_{k}$ is known, and for $k = 4$ it is known that $N_{4} = 20$ or $24$. 
We generalize two of the Gr\"unbaum Calculus operations in order to prove that for any integer $k$ there exists an 
integer $N_k$  and we give bounds on $N_{k}$ for $k \geq 5$.

The existence of geometric 2-configurations is easily established. The only (connected) combinatorial configuration $(n_2)$ is an $n$-lateral. 
For each $n, n\geq 3$, an $n$-lateral can be realized as a geometric multilateral (for the definition of a \emph{multilateral}, see~\cite{Gru2009b}). 
As a specific example, an $(n_{2})$ configuration can be realized as a regular $n$-gon with sides that are extended to lines. (For larger values 
of $n$ it can also be realized as an $n$-gonal star-polygon, but the underlying combinatorial structure is the same.) Hence:
\begin{prop}\label{thm:n2}
A geometric $(n_2)$ configuration exists if and only if $n \geq 3$. In other words, $N_2 = 3$.
\end{prop}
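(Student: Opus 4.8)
The plan is to split the biconditional into its two directions and treat each with a short, self-contained argument. For the forward (``only if'') direction I would reason purely combinatorially: in any configuration $(n_2)$ every line carries exactly two points and every point lies on exactly two lines. If $n=1$ there is a single line that must pass through two \emph{distinct} points, which do not exist; if $n=2$ each of the two lines must pass through both points, but two distinct points of the plane lie on a unique common line, forcing the two lines to coincide and contradicting that a configuration has distinct lines. Hence $n\geq 3$ for every $(n_2)$ configuration, combinatorial or geometric.

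For the reverse (``if'') direction I would exhibit an explicit geometric realization for each $n\geq 3$, namely the one already indicated in the text just before the statement: take the $n$ vertices of a regular $n$-gon as the points and the $n$ lines obtained by extending its sides as the lines. Since $n\geq 3$, these $n$ vertices are pairwise distinct and the $n$ side-lines are pairwise distinct; each vertex lies on exactly the two side-lines meeting at it, and each side-line contains exactly its two endpoints. This is precisely the incidence pattern of the $n$-lateral, so we obtain a geometric $(n_2)$ configuration. Alternatively one may simply invoke the fact recalled in the preceding paragraph of the paper, that an $n$-lateral is realized by a geometric multilateral for every $n\geq 3$.

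Combining the two directions yields that a geometric $(n_2)$ configuration exists exactly when $n\geq 3$, i.e.\ $N_2=3$. I do not expect a genuine obstacle here; the only point deserving an explicit word is the distinctness of the $n$ points and the $n$ lines in the construction, which is immediate for a regular $n$-gon as soon as $n\geq 3$ (the degenerate cases $n=1,2$ being exactly those excluded by the forward direction).
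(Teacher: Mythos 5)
Your proof is correct and matches the paper's approach: the paper likewise justifies the ``if'' direction by realizing the $n$-lateral as a regular $n$-gon with extended sides (a multilateral), treating the ``only if'' direction as immediate. Your explicit check that $n=1,2$ are impossible merely spells out what the paper leaves implicit, so there is nothing to object to.
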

For 3-configurations, $N_{3}$ is known to be 9 (see \cite[Section 2.1]{Gru2009b}); for example, Branko Gr\"{u}nbaum provides a proof 
(following that of Schr\"{o}ter from 1888, see the discussion in \cite[p. 65]{Gru2009b}) that the cyclic combinatorial configuration 
$\mc{C}_{3}(n)$, which has starting block $[0,1,3]$, can always be realized with straight lines for any $n \geq 9$. That is:
\begin{prop}\label{thm:n3}
A geometric $(n_3)$ configuration exists if and only if $n \geq 9$. In other words, $N_3 = 9$.
\end{prop}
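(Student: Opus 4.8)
The plan is to prove the two implications separately, and in both directions to lean on classical facts about small configurations rather than on any new machinery. For the forward implication --- that the existence of a geometric (indeed even combinatorial) $(n_3)$ configuration forces $n \geq 9$ --- I would first record the elementary counting bound: in any combinatorial $(n_k)$ configuration each point lies on $k$ lines, each such line contains $k-1$ further points, and two of these lines meet only in the given point, so the point sees at least $k(k-1)$ other points, whence $n \geq k(k-1)+1$; for $k=3$ this gives $n \geq 7$. It then remains to rule out $n = 7$ and $n = 8$. Here I would invoke the well-known facts that the unique combinatorial $(7_3)$ configuration is the Fano plane, which embeds only in $\mathrm{PG}(2,2)$ and admits no realization by points and straight lines over $\mathbb{R}$, and that the unique combinatorial $(8_3)$ configuration is the M\"obius--Kantor configuration, which is realizable over $\mathbb{C}$ but provably not over $\mathbb{R}$ (see the discussion in \cite[Section 2.2]{Gru2009b}). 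Together these give $N_3 \geq 9$.

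For the reverse implication I must exhibit, for every $n \geq 9$, a geometric $(n_3)$ configuration. The cleanest route is the one already signalled in the surrounding text: for each $n \geq 9$ take the cyclic combinatorial configuration $\mc{C}_3(n)$ with starting block $[0,1,3]$ --- whose points and lines are the residues modulo $n$ and the translates $\{i, i+1, i+3\}$ --- and quote the Schr\"oter-type theorem reproduced in \cite[Section 2.1]{Gru2009b} that this combinatorial configuration can be drawn with straight lines whenever $n \geq 9$. For the base value $n = 9$ one may alternatively just cite the Pappus configuration. Combining this with the lower bound from the previous paragraph yields $N_3 = 9$.

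The only genuinely hard ingredient of a from-scratch argument would be the straight-line realizability of $\mc{C}_3(n)$: one must place the $n$ points (for instance on a conic, or using an $n$-fold rotational symmetry) so that all $n$ incidences coming from the "long" entry of the block $[0,1,3]$ close up simultaneously, i.e. so that a certain system of incidence conditions is consistent over $\mathbb{R}$ for every $n \geq 9$ while failing precisely for $n = 7, 8$. Since this is classical and not the point of the present paper, I would not reprove it but simply cite Schr\"oter's construction via \cite{Gru2009b}; the content of the proposition here is just to package the two known endpoints into the clean statement $N_3 = 9$.
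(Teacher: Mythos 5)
Your proposal is correct and follows essentially the same route as the paper, which likewise establishes the bound by citing Schr\"oter's straight-line realization of the cyclic configuration $\mc{C}_3(n)$ (via \cite[Section 2.1]{Gru2009b}) for all $n \geq 9$ and the non-realizability of the unique $(7_3)$ and $(8_3)$ combinatorial configurations. The only addition on your side is the explicit counting bound $n \geq k(k-1)+1$ ruling out $n<7$, which is a harmless (and implicit in the paper) preliminary.
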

Note that there exist two combinatorial 3-configurations, 
namely $(7_3)$ and $(8_3)$, that do not admit a geometric realization. 

For $k = 4$, the problem of parameters for the existence of 4-configurations is much more complex, 
and the best bound $N_4$ is still not known. For a number of years, the smallest known 4-configuration 
was the $(21_{4})$ configuration which had been studied combinatorially by Klein and others, and 
whose geometric realization, first shown in 1990 \cite{GruRig1990}, initiated the modern study of 
configurations. In that paper, the authors conjectured that this was the smallest $(n_{4})$ configuration. 
In a series of papers \cite{Gru2000, Gru2000b, Gru2002, Gru2006} (summarized in \cite[Sections 3.1-3.4]{Gru2009b}), 
Gr\"unbaum showed that $N_{4}$ was finite and less than 43. In 2008, Gr\"{u}nbaum found a geometrically 
realizable $(20_{4})$ configuration \cite{Gru2008a}. In 2013, J\"{u}rgen Bokowski and Lars Schewe \cite{BokSch2013} 
showed that geometric $(n_{4})$ configurations exist for all $n \geq 18$ except possibly $n = 19, 22, 23, 26, 37, 43$. 
Subsequently, Bokowski and Pilaud \cite{BokPil2015} showed that there is no geometrically realizable $(19_{4})$ configuration, 
and they found examples of realizable $(37_{4})$ and $(43_{4})$ configurations \cite{BokPil2016}. In 2018, Michael 
Cuntz \cite{Cun2018} found realizations of $(22_{4})$ and $(26_{4})$ configurations. However, the question of whether 
a geometric $(23_{4})$ geometric configuration exists is currently still open.

In this paper, $\bar{N_k}$ will denote any known upper bound for $N_k$ and $N^R_k$ will denote currently best upper bound for $N_k$.

Summarizing the above results, we conclude:
\begin{prop}\label{prop:N-4-results}
A geometric $(n_4)$ configuration exists for $n = 18,20,21,22$ and  $n \geq 24$. Moreover, either $N_{4} = 20$ or $N_{4} = 24$ 
(depending on whether or not a $(23_{4})$ configuration exists). In other words, $N^R_4 = 24.$
 \end{prop}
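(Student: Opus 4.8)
The proposition is a synthesis of the realizability results recalled in the discussion above, so the plan is simply to line them up and then read off $N_4$; there is no new argument to make, only bookkeeping.

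First I would collect the positive realizations. From Bokowski and Schewe \cite{BokSch2013} one has a geometric $(n_4)$ configuration for every $n \geq 18$ except possibly $n \in \{19, 22, 23, 26, 37, 43\}$. The relevant exceptions are then supplied individually: $(20_4)$ by Gr\"unbaum \cite{Gru2008a}, $(21_4)$ by Gr\"unbaum and Rigby \cite{GruRig1990}, $(22_4)$ and $(26_4)$ by Cuntz \cite{Cun2018}, and $(37_4)$ and $(43_4)$ by Bokowski and Pilaud \cite{BokPil2016}. Intersecting the Bokowski--Schewe exceptional set with $\{n \geq 24\}$ leaves only $26$, $37$, $43$, which are precisely the values covered by Cuntz and by Bokowski--Pilaud; hence a geometric $(n_4)$ configuration exists for all $n \geq 24$, as well as for $n = 18, 20, 21, 22$.

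Next I would invoke the single negative input that matters for the statement: Bokowski and Pilaud \cite{BokPil2015} showed that no geometric $(19_4)$ configuration exists. (Non-realizability for $n \leq 17$ also follows from the results summarized in \cite[Sections 3.1--3.4]{Gru2009b}, but it is not needed here.) Consequently $n = 23$ is the unique parameter $n \geq 18$ whose status is still open, and everything is in place to determine $N_4$.

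Finally, since $(19_4)$ is not realizable we have $N_4 \geq 20$. If a geometric $(23_4)$ configuration exists, then a geometric $(n_4)$ configuration exists for every $n \geq 20$, and therefore $N_4 = 20$; if it does not, then $23$ is non-realizable while every $n \geq 24$ is realizable, so $N_4 = 24$. In either case $N_4 \leq 24$, and because the $(23_4)$ question is open no smaller upper bound can currently be certified, which gives $N^R_4 = 24$. The only place an error could slip in is the bookkeeping step: one must check carefully that the Bokowski--Schewe exceptional list has been entirely cleared for $n \geq 24$ and that the small borderline cases are correctly tabulated. I expect that verification, rather than any conceptual difficulty, to be the main point of care.
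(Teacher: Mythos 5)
Your proposal is correct and follows essentially the same route as the paper, which likewise treats this proposition as a summary of the cited literature (Grünbaum–Rigby for $(21_4)$, Grünbaum for $(20_4)$, Bokowski–Schewe for $n\geq 18$ minus the exceptional list, Bokowski–Pilaud for the nonexistence of $(19_4)$ and for $(37_4)$, $(43_4)$, Cuntz for $(22_4)$, $(26_4)$, with $(23_4)$ open). Your bookkeeping — clearing the exceptional values $26, 37, 43$ above $24$ and splitting into the two cases according to $(23_4)$ — matches the paper's intended argument exactly.
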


The main result of the paper is the following result.

\begin{theorem}\label{mainTheorem}
For each integer $k \geq 2$ the numbers $N_k$ exist. 
\end{theorem}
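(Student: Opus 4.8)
The natural strategy is induction on $k$. The base cases $k=2$ and $k=3$ are Propositions~\ref{thm:n2} and~\ref{thm:n3}: each exhibits an explicit family of geometric configurations (an $n$-gon with extended sides; the cyclic configuration $\mc{C}_3(n)$), and these small configurations will also serve as inputs to the later steps. So assume $k \ge 3$ and that $N_{k-1}$ exists, i.e.\ a geometric $(m_{k-1})$ configuration exists for every $m \ge N_{k-1}$.

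The key tool is the generalized Gr\"unbaum operation that raises the level by one: from a geometric $(m_{k-1})$ configuration, together with whatever auxiliary data the operation requires (typically a suitable axial affinity, or a free point through which a new pencil of lines can be drawn in general position), it produces a geometric $(n_k)$ configuration whose size $n$ is a controlled affine function of $m$ and of one or more free integer parameters. In the notation fixed in the preamble the outputs have shapes $\kPlusOneM$ and $\kMinusOneMPlus$ --- roughly, $n = (k+1)m$ in one construction and $n = (k-1)m + t$, with $t$ ranging over a short interval, in the other. Letting $m$ run over all of $\{N_{k-1}, N_{k-1}+1, \dots\}$ and $t$ over its permitted range produces geometric $(n_k)$ configurations for every $n$ in a union of arithmetic progressions, all of whose terms are at least an explicit function of $N_{k-1}$ and $k$.

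If in one of the constructions the parameter $t$ already sweeps a complete residue system modulo the corresponding common difference, this union is a cofinite set of integers and we are done, with $N_k$ read off directly. Otherwise a few residue classes remain, and these are filled using the second generalized operation --- the affine-replication/doubling operation $\du$, which sends an $(n_k)$ configuration with the appropriate symmetry to a $((tn)_k)$ configuration for small $t$ --- together with the elementary fact that a disjoint union, placed in general position, of an $(a_k)$ and a $(b_k)$ configuration is an $((a+b)_k)$ configuration. Combining these with the progressions already in hand, the set of realizable $n$ becomes a cofinite numerical semigroup, and $N_k$ may be taken to be one more than (a bound on) its Frobenius number.

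The part I expect to be genuinely delicate is not this bookkeeping but the verification that the operations can actually be invoked and then iterated. Each Gr\"unbaum-type operation needs a \emph{geometric} realization carrying specific extra structure (an axial affinity with a prescribed axis, a free point, general-position conditions excluding accidental incidences), not merely an abstract combinatorial configuration. One therefore has to (i) check that the families of Propositions~\ref{thm:n2} and~\ref{thm:n3}, and --- more importantly --- the configurations output by each operation, can be normalized to carry exactly the structure the next operation demands, so that the level-raising step, and $\du$ where it is used, may be applied repeatedly; and (ii) dispose of the finitely many small or degenerate values of $n$ below the threshold by direct construction. Sharpening $N_k$ to the explicit bounds promised for $k \ge 5$ is a further optimization on top of this and is not needed for the bare existence statement of Theorem~\ref{mainTheorem}.
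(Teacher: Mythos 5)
Your overall skeleton --- induction on $k$, with the two generalized Gr\"unbaum operations doing the work --- is the paper's strategy, but the way you combine the operations contains genuine errors. First, the affine switch is not a level-raising operation applied to an $(m_{k-1})$ configuration: by Lemma~\ref{lemma:km1pr}/Corollary~\ref{lem:affineSwitch} it takes an $(m_k)$ configuration that already possesses a pencil of $q$ parallel lines and returns $(((k-1)m+r)_k)$ configurations for $1\le r\le q$. The point of the paper's proof of Lemma~\ref{mainLemma} is that affine replication applied to an $(m_{k-1})$ configuration outputs a $(((k+1)m)_k)$ configuration which, by Lemma~\ref{lemma:kp1}, comes equipped with a pencil of $m$ parallel lines --- exactly the structure the switch needs --- so the ``delicate verification'' you defer is already built into the construction. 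Consequently the switch parameter is not confined to a ``short interval'': it runs up to $m$, which grows. That growth is the actual mechanism: the composite produces bands $[(k^2-1)m+1,\;(k^2-1)m+m]$ of realizable numbers (extendable back to $(k^2-1)m$, itself an affine-replication output), whose starting points are spaced by the constant $k^2-1$ while their lengths grow linearly, so for $m\ge k^2-2$ consecutive bands meet or overlap and every $n\ge (k^2-1)\max\{\bar N_{k-1},\,k^2-2\}$ is $k$-realizable --- no residue-class or Frobenius analysis is needed.

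Second, your fallback branch is unsupported as stated: it invokes an operation $\du$ sending an $(n_k)$ configuration to a $((tn)_k)$ configuration, but no such level-preserving multiplicative operation is established for general $k$ (the paper generalizes exactly two operations, affine replication and affine switch, and neither has this form), so ``the remaining residue classes are filled using $\du$'' is a step that would fail without a new construction. The disjoint-union observation is sound if one admits disconnected configurations, and it could in fact rescue your semigroup argument --- for instance $(k+1)m$ and $(k^2-1)m+1$ are coprime realizable numbers, so the semigroup they generate is cofinite --- but you would need to argue this explicitly rather than route it through $\du$. (Two minor points: the values of $n$ below the threshold never need to be ``disposed of,'' since $N_k$ is only a threshold; and the induction needs only the base case $k=2$.)
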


To simplify subsequent discussions, we introduce the notion of \emph{configuration-realizability}, 
abbreviated as \emph{realizability}, of numbers. A number $n$ is \emph{$k$-realizable} if and only if there 
exists a geometric $(n_k)$ configuration. We may rephrase Proposition \ref{prop:N-4-results} 
by stating that the numbers $n = 18, 20, 21, 22$ and $n \geq 24$ are $4$-realizable.  Also note that the 
number $9$ is $2$- and $3$-realizable but not $k$-realizable for any $k \geq 4$.


\section{Generalizing two constructions from the Gr\"{u}nbaum Incidence Calculus} \label{sect:Grunbaum}


In this section, we generalize two constructions of the Gr\"unbaum Incidence Calculus which we will use to prove the existence 
of $N_{k}$ for any $k$. As input to examples of these constructions, we often will use the standard geometric realization of 
the $(9_{3})$ Pappus configuration $\mc{P}$, shown in Figure \ref{fig:pappus}. 

\begin{figure}[htbp]
\begin{center}
\includegraphics[width=.5\textwidth]{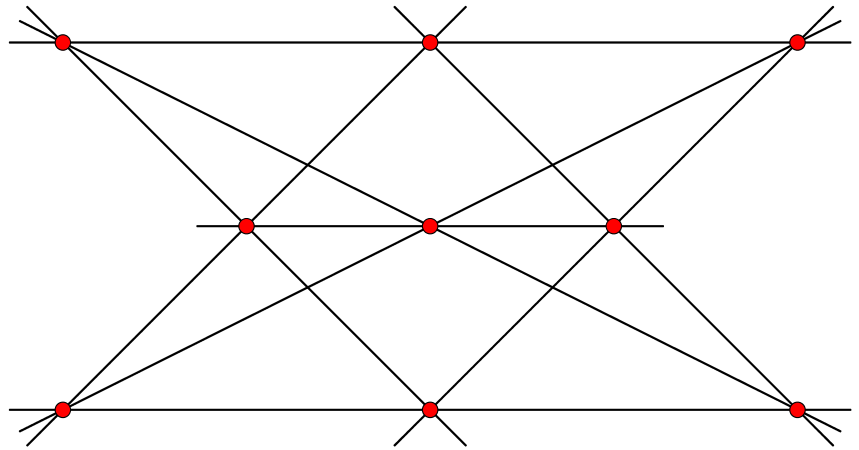}
\caption{The standard geometric realization of the $(9_{3})$ Pappus configuration $\mc{P}$.}
\label{fig:pappus}
\end{center}
\end{figure}

\newcommand{\AR}{\mathrm{AR}}

The first, which we call \emph{affine replication} and denote $\AR(m, k)$, 
generalizes Gr\"unbaum's $\mathbf{(5m)}$ construction; 
it takes as input an $(m_{k-1})$ configuration and produces a $((k+1)m_{k})$ configuration with a pencil of $m$ parallel lines. 

\newcommand{\AS}{\mathrm{AS+}}

The second, which we call 
\emph{affine switch}, 
is analogous to Gr\"unbaum's $\mathbf{(3m+)}$ construction. 
It takes as input a single $(m_{k})$ configuration with a set of $p$ parallel lines in one direction and a set of $q$ parallel lines in 
a second direction which are disjoint (in terms of configuration points) from the pencil of $p$ lines, 
and it produces a configuration $((k-1)m+r)_{k})$ for any $r$ with $1 \leq r \leq p+q$. Applying a series of affine switches to a single 
starting $(m_{k})$ configuration with a pencil of $q$ parallel lines produces a consecutive sequence (or ``band'') of configurations 
\[((k-1)m+1)_{k}), \ldots, ((k-1)m+q)_{k})\]
which we will refer to as $\AS(m,k,q)$.

\subsection{Affine Replication
} 

Starting from an $(m_{k-1})$ configuration $\mc{C}$ we construct a new configuration $\mc{D}$ which is a $((k+1)m_{k})$ configuration. 
A sketch of the construction is that $k-1$ affine images of $\mc{C}$ are carefully constructed so that each point $P$ of $\mc{C}$ is collinear 
with the $k-1$ images of $P$, and each line of $\mc{C}$ and its images are concurrent at a single point. Then $\mc{D}$ consists of the the 
points and lines of $\mc{C}$ and its images, the new lines corresponding to the collinearities from each point $P$, and the new points of 
concurrence corresponding to the lines of $\mc{C}$ and their images.

The details of the construction are as follows:

\be

\item Let $A$ be a line that (i) does not pass through the intersection of two lines of $\mc{C}$, whether or not that intersection point 
is a point of the configuration; (ii) is perpendicular to no line connecting any two points of $\mc{C}$, whether or not that line is a line 
of the configuration; (iii) intersects all lines of $\mc{C}$.

\item 
Let $\alpha_{1}, \alpha_{2}, \ldots \alpha_{k-1}$ be pairwise different orthogonal axial affinities with axis $A$. 
Construct copies $\mc{C}_{1} = \alpha_{1}(\mc{C})$, $\mc{C}_{2} = \alpha_{2}(\mc{C})$,\ldots, 
$\mc{C}_{k-1} = \alpha_{k-1}(\mc{C})$ of $\mc{\mc{C}} = \mc{C}_{0}$.

\item 
Let $\ell$ be any line of $\mc{C}$. Since $A$ is the common axis of each $\alpha_i$, the point $A\cap \ell$ is fixed by all these affinities.
This means that the $k$-tuple of lines $\ell, \alpha_{1}(\ell), \ldots, \alpha_{k-1}(\ell)$ has a common point of intersection
lying on $A$. We denote this point by $F_{\ell}$. By condition (i) in (1), for different lines $\ell, \ell' \in \mc {C}$ the points 
$F_{\ell}, F_{\ell'}$ differ from each other; they also differ from each point of the configurations $\mc {C}_i $ $(i=1,2,\dots, k-1)$.  
We denote the set $\{F_{\ell}: \ell\in \mc{C}\}$ of points lying on $A$ by $\mc{F}$.

\item
Let $P$ be any point of $\mc{C}$. Since the affinities $\alpha_{i}$ are all orthogonal affinities (with the common axis $A$), 
the $k$-tuple of points $P, \alpha_{1}(P), \dots, \alpha_{k-1}(P)$ lies on a line perpendicular to $A$ (and avoids $A$, by condition (i)).
We denote this line by $\ell_P$. Clearly, we have altogether $m$ such lines, one for each point of $\mc{C}$, with no two of them coinciding, by condition (ii).
We denote this set $\{\ell_P: P\in \mc{C}\}$ of lines by $\mc L$.

\item
Put $\mc{D} = \mc{C}_0\cup \mc{C}_{1}\cup\dots \cup\mc{C}_{k-1}\cup \mc{F} \cup \mc L$.

\ee
\begin{figure}[h]
\begin{center}
\includegraphics[width=.66\textwidth]{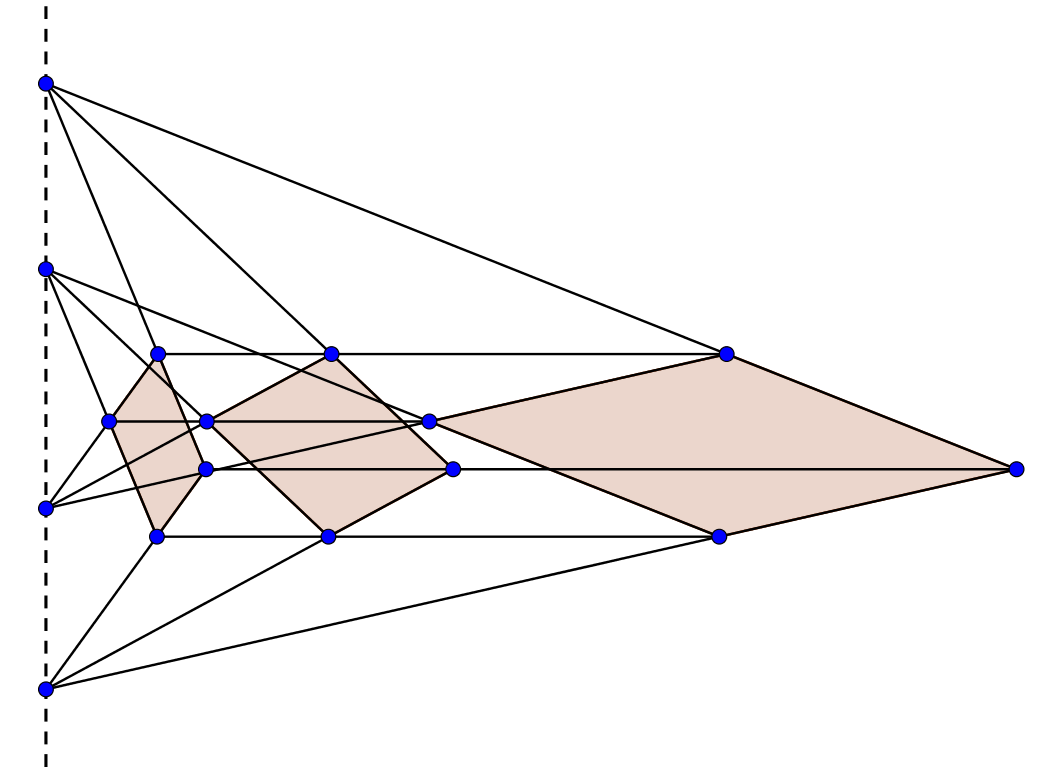}
\caption{Affine replication $\AR(4, 3)$ applied to a quadrilateral, i.e.\ a $(4_2)$ configuration;
it results in a $(16_3)$ configuration. The corresponding ordinary quadrangles are shaded (the starting, 
hence each of the three quadrangles are parallelograms). The axis $A$ is shown by a dashed line.}
\label{fig:Quadrilaterals}
\end{center}
\end{figure}

\noindent
The conditions of the construction imply that $\mc{D}$ is a $((k+1)m_{k})$ configuration. 
Moreover, by construction, $\mc{D}$ has a pencil of $m$ parallel lines. Figures \ref{fig:Quadrilaterals} and \ref{fig:PappusExt} 
show two examples of affine replication, first starting with a $(4_{2})$ configuration to produce a $(16_{3})$ configuration, and 
then starting with the $(9_{3})$ Pappus configuration to produce a $(45_{4})$ configuration.

\begin{remark}
The orthogonal affinities used in the construction are just a particular case of the axial affinities called \emph{strains}~\cite{Cox1969}; 
they can be replaced by other types of axial affinities, namely, by oblique affinities (each with the same (oblique) direction), and even, 
by \emph{shears} (where the direction of affinity is parallel with the axis)~\cite{Cox1969}, while suitably adjusting conditions (i--iii) in (1).
\end{remark}

\begin{figure}[h!]
\begin{center}
\includegraphics[width=0.55\textwidth  
]{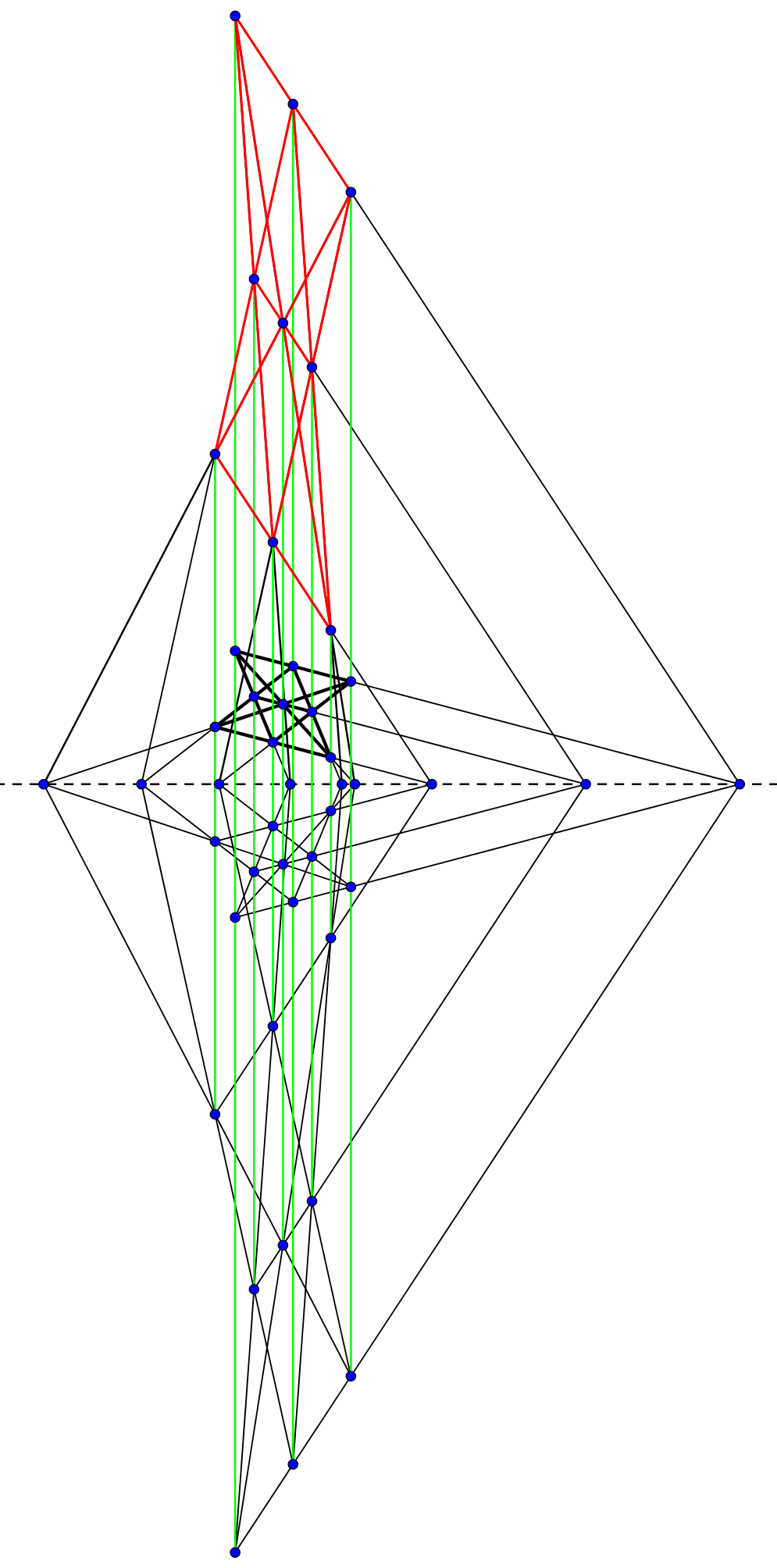}
\caption{Affine replication $\AR(9,4)$ applied to the $(9_3)$ Pappus configuration, which yields a $(45_4)$ configuration. 
The starting figure is indicated by thick segments, while the first image is highlighted by red segments. The axis $A$ is shown by a dashed line. 
The construction is chosen so as to exemplify that ordinary mirror reflection can also be used. Note that the resulting configuration contains a 
pencil of 9 parallel lines arising from the construction, shown in green.    
}
\label{fig:PappusExt}
\end{center}
\end{figure}

We may summarize the above discussion as follows:

\begin{lemma}\label{lemma:kp1}
If affine replication $\AR(m, k)$ is applied to any $(m_{k-1})$ configuration, the result is 
a $(((k+1)m)_k)$ configuration with a pencil of $m$ parallel lines.
\end{lemma}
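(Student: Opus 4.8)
The plan is to split the argument into a combinatorial part and a metric part. First I would check that steps (1)--(5) describe an abstract $(((k+1)m)_k)$ configuration, and then that the points and lines written down in step (5) are pairwise distinct and carry no incidences beyond the prescribed ones, so that $\mc{D}$ is a faithful geometric realization of that abstract configuration. For the metric part it is convenient to pass to affine coordinates in which $A$ is the $x$-axis, so that each $\alpha_i$ acts by $(x,y)\mapsto(x,\lambda_i y)$, with $\alpha_0=\mathrm{id}$, $\mc{C}_0=\mc{C}$, $\lambda_0=1$, and $\lambda_1,\dots,\lambda_{k-1}$ pairwise distinct and different from $1$ and $0$.

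First I would note that a line $A$ obeying (i)--(iii) exists, since each of the three conditions excludes only finitely many directions and, once an admissible direction is fixed, only finitely many positions; and I would extract from (i)--(iii) the general-position facts used below. Since $\mc{C}$ is an $(m_{k-1})$ configuration with $k\geq 3$, every line of $\mc{C}$ carries at least two of its points and every point lies on at least two of its lines, so (i) forces that no point of $\mc{C}$ lies on $A$ and that $A\notin\mc{C}$, (ii) forces that no two points of $\mc{C}$ have equal $x$-coordinate and that no line of $\mc{C}$ is vertical, and (iii) forces that no line of $\mc{C}$ is horizontal. Hence each line $\ell$ of $\mc{C}$ has finite nonzero slope $s_\ell$; the lines $\alpha_0(\ell),\dots,\alpha_{k-1}(\ell)$ then have the pairwise distinct slopes $\lambda_i s_\ell$ and all pass through $F_\ell=A\cap\ell$; the points $\alpha_0(P),\dots,\alpha_{k-1}(P)$ are distinct and lie on the vertical line $\ell_P$; the $m$ points $F_\ell$ are pairwise distinct by (i); and the $m$ lines $\ell_P$ are pairwise distinct by (ii).

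Next I would record the abstract structure: the $(k+1)m$ points are the $km$ images $\alpha_i(P)$ together with the $m$ points $F_\ell$, the $(k+1)m$ lines are the $km$ images $\alpha_i(\ell)$ together with the $m$ lines $\ell_P$; the point $\alpha_i(P)$ lies on the $k-1$ lines $\alpha_i(\ell)$ for which $P\in\ell$ in $\mc{C}$ and on $\ell_P$, while $F_\ell$ lies on the $k$ lines $\alpha_i(\ell)$, and dually. A short case distinction according to the type of a pair of points --- two images from one copy; two images from different copies; an image and some $F_\ell$; two of the $F_\ell$ --- using only that $\mc{C}$ is an $(m_{k-1})$ configuration, then shows that two distinct points lie on at most one common line, so the structure is a combinatorial $(((k+1)m)_k)$ configuration.

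The substance of the proof is faithfulness. The prescribed incidences hold because affinities preserve collinearity inside each copy and because $\ell_P$ and $F_\ell$ were built to meet them, and the coincidences among points, among lines, and between the two families are all ruled out by the general-position facts above. For the extraneous incidences, the only nontrivial case is a point of one copy lying on a line of another: $\alpha_i(P)\in\alpha_j(\ell)$ holds precisely when $\lambda_i/\lambda_j = y_\ell(x_P)/y_P$, where $y_\ell(x_P)$ is the $y$-coordinate of $\ell\cap\ell_P$; this ratio equals $1$ when $P\in\ell$ (impossible since $i\neq j$) and is otherwise a value determined by $\mc{C}$ alone. Likewise $F_\ell\in\alpha_j(\ell')$ with $\ell'\neq\ell$ would put $A$ through $\ell\cap\ell'$, barred by (i), and $F_\ell\in\ell_P$ would make $A\cap\ell$ the foot of the perpendicular from $P$ to $A$. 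Hence, once the axis is placed in general position and the ratios $\lambda_i/\lambda_j$ avoid the finite set of values $y_\ell(x_P)/y_P$ occurring here --- which requires choosing the $\lambda_i$ generically, not merely pairwise distinct --- no extraneous incidence arises, $\mc{D}$ realizes the combinatorial configuration above, and since all $m$ lines $\ell_P$ are perpendicular to $A$ they form a pencil of $m$ parallel lines. I expect this faithfulness step to be the main obstacle: compiling the complete list of possible coincidences and stray incidences and deciding precisely which are killed by (i)--(iii) and which require the axis and the affinities to be generic rather than merely satisfying the stated constraints.
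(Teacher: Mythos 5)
Your proposal is correct and follows the same route as the paper: the paper's own justification of this lemma is nothing more than the construction steps (1)--(5) together with the distinctness remarks in steps (3) and (4), capped by the one-line assertion that ``the conditions of the construction imply'' the statement. What you add --- and it is a genuine addition --- is the faithfulness check. Your observation that conditions (i)--(iii) together with merely \emph{pairwise different} orthogonal affinities do not exclude extraneous incidences is right: with $A$ as the $x$-axis and $\alpha_i:(x,y)\mapsto(x,\lambda_i y)$, the incidence $\alpha_i(P)\in\alpha_j(\ell)$ occurs exactly when $\lambda_i/\lambda_j$ equals $y_\ell(x_P)/y_P$, which for $P\notin\ell$ is a fixed value determined by $\mc{C}$ and the placement of $A$ and so can be hit by an unlucky choice of ratios; similarly $F_\ell\in\ell_P$ (the perpendicular from $P$ meeting $A$ exactly where $\ell$ does) is excluded neither by (i) nor by (ii), since $F_\ell$ need not be a configuration point or an intersection of two configuration lines. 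Both phenomena are avoided by choosing the axis and the ratios $\lambda_i$ generically, i.e.\ outside finitely many forbidden values, so the lemma stands, but your version makes explicit a genericity hypothesis that the paper leaves implicit. The remainder of your argument --- the incidence counts, the distinctness of the $F_\ell$ from (i) and of the $\ell_P$ from (ii), the avoidance of $A$ by all configuration points, and the pencil of $m$ lines perpendicular to $A$ --- coincides with the paper's reasoning.
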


\subsection{Affine Switch}

In our description of this construction, we are inspired by Gr\"unbaum~\cite[\S 3.3, pp. 177--180]{Gru2009b} but we have chosen 
a slightly different approach (in particular, we avoid using 3-space). At the same time, we generalize it from $(m_4)$ to $(m_k)$.

A sketch of the construction is as follows: Suppose that $\mc{C}$ is an $(m_{k})$ configuration that contains a pencil $\mc{P}$ of 
$p$ parallel lines in one direction, and a pencil $\mc{Q}$ of $q$ parallel lines in a second direction, where the two pencils share 
no common configuration points; we say that the pencils are \emph{independent}. For each subpencil $\mc S$ of $\mc P$  and 
$\mc{T}$ of $\mc{Q}$ containing $s$ parallel lines and $t$ parallel lines respectively, with $1 \leq s \leq p$ and $0 \leq t \leq q$, 
we form the subfiguration $\hat{\mc C}$ by deleting $\mc S$ and $\mc T$ from $\mc C$ (here we use the term \emph{subfiguration} 
in the sense of Gr\"unbaum~\cite{Gru2009b}). We then carefully construct $k-2$ affine images of $\hat{\mc C}$ in such a way that 
for each (deleted) line $\ell$ in $\mc S$ and for each point $P_{1}, P_{2}, \ldots P_{k}$ on $\ell$, the collection of lines through each 
$P_{i}$ and its images all intersect in a single point $Y_{\ell}$, and simultaneously, for each line $\ell'$ in $\mc T$ and for each point 
$Q_{1}, Q_{2}, \ldots Q_{k}$ on $\ell'$, the collection of lines through $Q_{i}$ and its images all intersect in a single point $X_{\ell'}$. 
Let $\mc D$ be the collection of all the undeleted points and lines of $\hat{\mc{C}}$ and its affine images and for each of the deleted 
$\ell$ and $\ell'$, the new lines through each point $P_{i}$ $Q_{i}$ and their images, the points $Y_{\ell}$, and the points $X_{\ell'}$; 
then $\mc{D}$ is a $( ((k-1)m + s+t)_{k})$ configuration. 

As a preparation, we need the following two propositions.

\begin{prop}\label{prop:pencil}
Let $\alpha$ be a (non-homothetic) affine transformation that is given by a diagonal matrix with respect to the standard basis.
Note that in this case $\alpha$ can be written as a (commuting) product of two orthogonal affinities whose axes coincide with the 
$x$- and $y$-axis, respectively:
$$
\begin{matr}{cc}
a&0\\0&b
\end{matr}
=
\begin{matr}{cc}
a&0\\0&1
\end{matr}
\begin{matr}{cc}
1&0\\0&b
\end{matr}
=
\begin{matr}{cc}
1&0\\0&b
\end{matr}%
\begin{matr}{cc}
a&0\\0&1
\end{matr}.
$$
Let $P_0(x_0,0),P_1(x_0, y_1),\dots,P_k(x_0, y_k)$ be a range of $k+1$ different points on a line which 
is perpendicular to the $x$-axis and intersects it in $P_0$. Then the $k$ lines connecting the pairs of points 
$(P_1, \alpha(P_1)),\dots, (P_k, \alpha(P_k))$ form a pencil with center $C_x$ such that $C_x$ lies on the 
$x$-axis, and its position depends only on $\alpha$ and $x_0$.

Likewise, let $Q_0(x_0,y_0),Q_1(x_1, y_0),\dots,Q_k(x_k, y_0)$ be a range of $k+1$ different points on a 
line which is perpendicular to the $y$-axis and intersects it in $Q_0$. Then the $k$ lines connecting the pairs 
of points $(Q_1, \alpha(Q_1)),\dots, (Q_k, \alpha(Q_k))$ form a pencil with center $C_y$ such that $C_y$ 
lies on the $y$-axis, and its position depends only on $\alpha$ and $y_0$.
\end{prop}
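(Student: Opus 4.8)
The plan is to prove the first assertion (about the line perpendicular to the $x$-axis); the second follows by exchanging the roles of the coordinates. First I would write $\alpha$ explicitly in coordinates as $\alpha(x,y) = (ax, by)$, so that $P_i = (x_0, y_i)$ is mapped to $\alpha(P_i) = (ax_0, by_i)$. Since $\alpha$ is assumed non-homothetic, we have $a \neq b$, and since $\alpha$ is an affinity, $a,b \neq 0$. The line $\overline{P_i \alpha(P_i)}$ joins $(x_0, y_i)$ and $(ax_0, by_i)$; I would parametrize it and compute where it meets the $x$-axis, i.e.\ solve for the point with second coordinate $0$. A direct computation gives the $x$-intercept as $x = \dfrac{x_0(a - b \cdot (y_i / y_i))}{\,\ldots\,}$ — more carefully, writing the line as $\{(1-t)(x_0, y_i) + t(ax_0, by_i) : t \in \mathbb{R}\}$ and setting the $y$-coordinate $(1-t)y_i + tby_i = y_i(1 - t + tb)$ equal to $0$ gives $t = \dfrac{1}{1-b}$ (valid since $b \neq 1$ when $a \neq b$ is the relevant case; the degenerate cases $b=1$ or $y_i = 0$ need a word), and then the $x$-coordinate is $(1-t)x_0 + t a x_0 = x_0\big(1 + t(a-1)\big) = x_0 \cdot \dfrac{a - b}{1 - b}$.

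The key observation is that this value, $C_x := \left( x_0 \dfrac{a-b}{1-b},\, 0\right)$, is \emph{independent of $i$}: it depends only on $a$, $b$ (that is, on $\alpha$) and on $x_0$, exactly as claimed. Hence every one of the $k$ lines $\overline{P_i\,\alpha(P_i)}$ passes through the single point $C_x$ on the $x$-axis, so they form a pencil with that center. I would also note that the $k$ points $P_i$ are distinct and none lies on the $x$-axis (the $y_i$ are distinct and we may take them nonzero, as $P_0$ is the unique intersection with the axis), so each connecting line is well-defined and genuinely a line (not a single point), and the $k$ lines are pairwise distinct because the slopes $\dfrac{by_i - y_i}{ax_0 - x_0} = \dfrac{(b-1)y_i}{(a-1)x_0}$ are distinct for distinct $y_i$ (using $a \neq 1$; if $a = 1$ then $b \neq 1$ and one argues with the reciprocal slope, or observes the lines are vertical only in a degenerate sub-case one handles separately).

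The second statement is proved the same way with $x$ and $y$ interchanged: for $Q_i = (x_i, y_0)$ one finds that $\overline{Q_i\,\alpha(Q_i)}$ meets the $y$-axis at $C_y := \left(0,\, y_0 \dfrac{b-a}{1-a}\right)$, again independent of $i$. I do not expect a serious obstacle here: the whole proposition is an elementary coordinate computation, and the only thing requiring care is bookkeeping the nondegeneracy hypotheses — $a \neq b$ (non-homothetic), $a, b \neq 0$ (affinity), and the harmless normalization that the chosen range of points avoids the relevant axis — so that ``pencil'' is the correct conclusion rather than a trivial or degenerate family. The factorization of the diagonal matrix displayed in the statement is not needed for this argument; it is recorded there for later use, when these pencils are assembled in the affine switch construction.
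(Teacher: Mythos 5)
Your proposal is correct and follows essentially the same route as the paper, whose proof is exactly this one-line elementary computation of the common intercept of the connecting lines. (Your explicit values $C_x=\bigl(x_0\tfrac{a-b}{1-b},0\bigr)$ and $C_y=\bigl(0,y_0\tfrac{b-a}{1-a}\bigr)$ are the correct ones; the paper's displayed formulas contain a sign/coordinate-slot slip, and your extra remarks on the degenerate cases $a=1$ or $b=1$ and on the distinctness of the lines are care the paper omits.)
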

\begin{proof} An elementary calculation shows that 
$$
C_x=C_x\left(0,\displaystyle\frac{a-b}{b-1}\,x_0\right),
\text{\;resp.\;\,}
C_y=C_y\left(0,\displaystyle\frac{b-a}{a-1}\,y_0\right)
$$
is the common point of intersection of any two, hence of all the lines in question.
\end{proof}
\medskip

\begin{prop} \label{prop:series}
Let $h\ge3$ be a positive integer, and for each $j$ with $j=1,\dots,h-1$, let the affine transformation $\alpha_j$
be given by the matrix 
\begin{equation} \label{matrix}
M_j =
\begin{matr}{cc}
\displaystyle\frac{h-j}{h}&0\\0&\displaystyle\frac{h+j}{h}
\end{matr}.
\end{equation}
Then for any point $P$, the points $P, \alpha_1(P),\dots,\alpha_{h-1}(P)$ are collinear.
\end{prop}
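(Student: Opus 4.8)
The plan is to carry out the whole argument in coordinates relative to the standard basis and to pin down an explicit line containing all $h$ points. Write $P=(x,y)$. Applying the matrix $M_j$ from \eqref{matrix} gives
\[
\alpha_j(P)=\left(\frac{h-j}{h}\,x,\ \frac{h+j}{h}\,y\right),
\]
and therefore
\[
\alpha_j(P)-P=\left(-\frac{j}{h}\,x,\ \frac{j}{h}\,y\right)=\frac{j}{h}\,(-x,\,y).
\]

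The key observation I would then make is that the vector $v:=(-x,y)$ on the right does not depend on $j$: it is determined by $P$ alone. Consequently, for every $j=0,1,\dots,h-1$ (reading $\alpha_0$ as the identity) the point $\alpha_j(P)$ lies on the single line through $P$ with direction $v$, namely $\{P+tv : t\in\mathbb{R}\}$; concretely, when $x\neq 0$ and $y\neq 0$ this is the line $X/x+Y/y=2$, while if $x=0$ or $y=0$ every $\alpha_j(P)$ lies on a coordinate axis, so collinearity is immediate in those degenerate cases as well. In particular $P,\alpha_1(P),\dots,\alpha_{h-1}(P)$ are collinear, which is exactly the assertion.

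I do not expect a genuine obstacle here: the proposition collapses to the fact that the displacement $\alpha_j(P)-P$ is the \emph{same} vector scaled by $j/h$, and this in turn is forced by the choice of diagonal entries $(h-j)/h$ and $(h+j)/h$, which deviate from $1$ by the opposite amounts $\mp j/h$. The only point worth a remark is non-degeneracy: the scalars $0,1/h,\dots,(h-1)/h$ are distinct, so the $h$ points are pairwise distinct as long as $v\neq 0$, i.e.\ as long as $P$ is not the origin (which is a fixed point of every $\alpha_j$, making the statement vacuous there). It would be natural to flag this independence-of-$j$ phenomenon explicitly, since it is precisely what later makes the new lines in the affine switch concur as required.
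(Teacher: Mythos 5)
Your argument is correct and is essentially the paper's own proof: computing $\alpha_j(P)-P=\frac{j}{h}(-x,y)$ is just the coordinate form of the paper's observation that the matrices $M_j-U$ are scalar multiples of one another, so all displacement vectors are parallel and the points lie on one line through $P$. The explicit line $X/x+Y/y=2$ and the degenerate-case remarks are fine but add nothing beyond the paper's argument.
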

\begin{proof}
Choose any $j'$ and $j''$, and form the difference matrices $M_{j'}-U$ and $M_{j''}-U$ with the unit matrix $U$.
Observe that these matrices are such that one is a scalar multiple of the other. Hence the vectors
$\overrightarrow{PP'}$ and $\overrightarrow{PP''}$ are parallel, where $P'=\alpha_{j'}(P)$ and 
$P''=\alpha_{j''}(P)$. This means that the points $P$, $P'$ and $P''$ lie on the same line.
\end{proof}
\medskip

\begin{figure}[h!]
\begin{center}
\includegraphics[width=.9\textwidth]{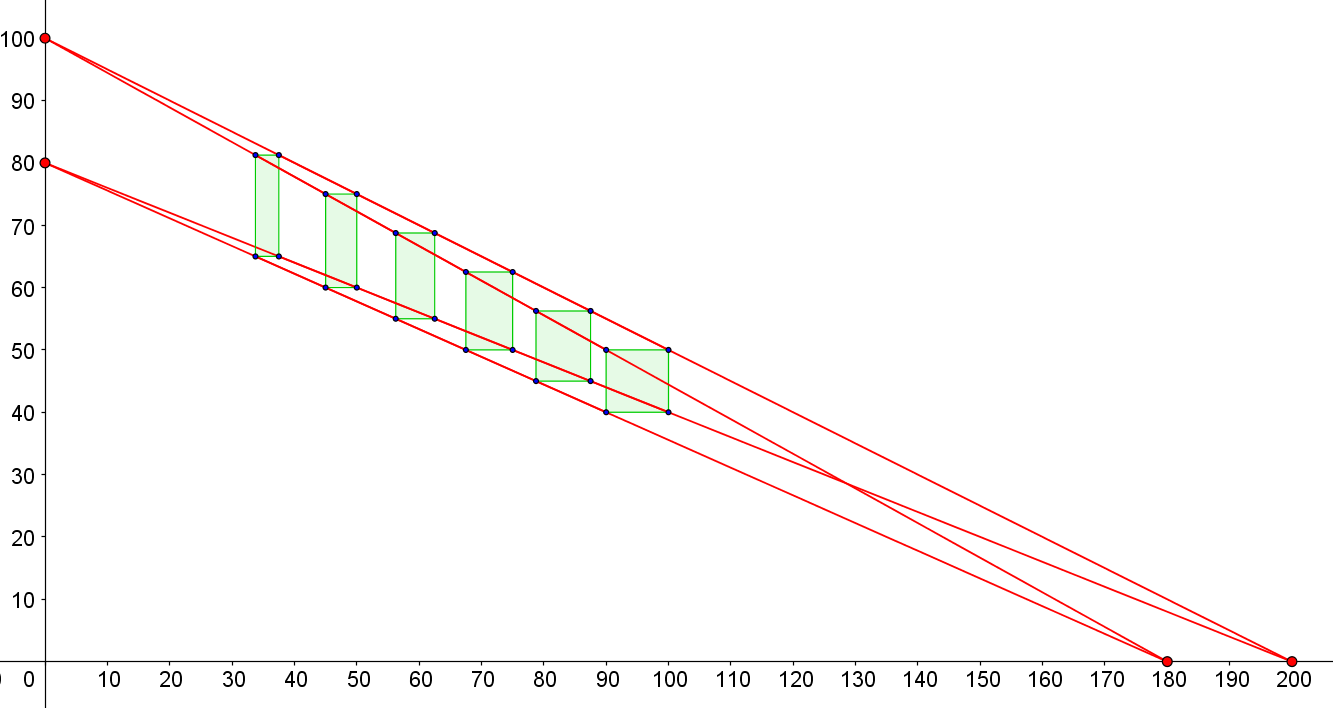}
\caption{Illustration for Propositions~\ref{prop:pencil} and~\ref{prop:series}. 
             Affine transformations with parameters $h=8$ and $j=1,\dots 5$ are applied on a square.}
\label{fig:series}
\end{center}
\end{figure}

Now we apply the following construction. Let $\mathcal C$ be an $(m_k)$ configuration such that it contains a pencil $\mc P$ of 
$p\ge1$ parallel lines and a pencil $\mc Q$ of $q\ge1$ parallel lines, too, such that these pencils are perpendicular to each other and are independent.
Note that any configuration containing independent pencils in two different directions can be 
converted by a suitable affine transformation to a configuration in which these pencils will be perpendicular to each other.

Choose a position of $\mathcal C$ (applying an affine transformation if necessary) such that these pencils are parallel to the 
$x$-axis and $y$-axis, respectively. 
\begin{enumerate}
\item
Remove lines $\ell_1,\dots, \ell_s$ $(s\le p)$ from the pencil $\mc P$ parallel to the $x$-axis and $\ell_{s+1},\dots, \ell_{s+t}$ $(0 \leq t\le q)$
from the pencil $\mc Q$ parallel to the $y$-axis. Let $\widehat{\mathcal {C}}$ denote the substructure of $\mathcal C$ obtained in this way. 
\item \label{item:j}
Let $h$ be a positive integer (say, some suitable multiple of $k$), and for each $j$, $j=1, \dots, k-2$, let $\alpha_j$ be an
affine transformation defined in Proposition~\ref{prop:series}. Form the images $\alpha_j(\widehat{\mathcal {C}})$ for 
all $j$ given here.
\item\label{item:ConnectingLine}
Let $P$ be a point of $\widehat{\mathcal {C}}$ that was incident to one of the lines $\ell_i$ removed from $\mathcal C$. 
Take the images $\alpha_j(P)$ for all $j$ given in (\ref{item:j}). By Proposition~\ref{prop:series}, all the $k-1$ points 
$P, \alpha_j(P)$ are collinear. Let $c_i(P)$ denote this line.
\item\label{item:centre}
Take all the configuration points on $\ell_i$ and repeat (\ref{item:ConnectingLine}) for each of them. By Proposition~\ref{prop:pencil},
the $k$-set of lines $\{c_i(P): P\in \ell_i\}$ form a pencil whose center lies on the $x$-axis or the $y$-axis according to which axis $\ell_i$ is 
perpendicular to. 
\item \label{item:NewElements}
Let $r=(s+t)\in\{1,2, \dots, p+q\}$ be the number of lines removed from the pencils of $\mathcal C$ in the initial step of our construction. 
Repeat (\ref{item:centre}) for all these lines. Eventually, we obtain $rk$ new lines and $r$ new points such that the set of the new
lines is partitioned into $r$ pencils, and  the new points are precisely the centers of these pencils (hence they lie on the coordinate 
axes). Observe that there are precisely $k$ lines passing through each of the new points, and likewise there are precisely $k$ points 
lying on each of the new lines.
\item
Putting everything together, we form a $(((k-1)m+r)_k)$ configuration, whose 
\begin{itemize}
\item
points come from the $(k-1)m$ points of the copies of $\widehat{\mathcal {C}}$, completed with the $r$ new points considered 
in (\ref {item:NewElements}).%
\item
lines come from the $(k-1)(m-r)$ lines of the copies of $\widehat{\mathcal{C}}$, completed with the $rk$
new lines considered in (\ref{item:NewElements}).
\end{itemize}

We use the notation $AS(m, k, r)$ to represent the $(((k-1)m+r)_k)$ configuration described above.
\end{enumerate}
%


Summarizing the discussion above, we conclude:


%
\begin{lemma}\label{lemma:km1pr}
Beginning with any $(m_k)$ configuration with independent pencils of $p\geq 0$ and $q \geq 1$ parallel lines, 
for each integer $r$ with $1 \leq r \leq p+q$, the affine switch construction produces an $(n_k)$ configuration, 
where $n = (k-1)m+r$.
\end{lemma}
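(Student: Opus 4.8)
The plan is to verify that the construction described immediately above actually yields a configuration with the claimed parameters; almost all of the geometric content has been packaged into Propositions~\ref{prop:pencil} and~\ref{prop:series}, so the proof is largely a matter of careful bookkeeping of incidences together with one genericity remark. First I would fix the starting $(m_k)$ configuration $\mathcal C$, apply (as noted in the text) an affine transformation so that the pencil $\mc P$ is parallel to the $x$-axis and $\mc Q$ is parallel to the $y$-axis, choose the subpencils $\ell_1,\dots,\ell_s\subseteq\mc P$ and $\ell_{s+1},\dots,\ell_{s+t}\subseteq\mc Q$ with $r=s+t$, and form $\widehat{\mathcal C}$ by deleting these $r$ lines. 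Deleting a line from a $k$-configuration leaves each of its $k$ points on only $k-1$ lines, while all other points and lines retain their incidences; since the $r$ deleted lines are pairwise independent they pass through $rk$ distinct points, so $\widehat{\mathcal C}$ has $m$ points (of which $rk$ are ``deficient'') and $m-r$ lines. Then I would take the $k-2$ affinities $\alpha_1,\dots,\alpha_{k-2}$ from Proposition~\ref{prop:series} (with $h$ a multiple of $k$, say) and the $k-1$ copies $\widehat{\mathcal C}=\alpha_0(\widehat{\mathcal C}),\alpha_1(\widehat{\mathcal C}),\dots,\alpha_{k-2}(\widehat{\mathcal C})$, giving $(k-1)m$ points and $(k-1)(m-r)$ lines so far.

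Next I would introduce the new elements and count incidences. For each deleted line $\ell_i$ and each configuration point $P\in\ell_i$, Proposition~\ref{prop:series} gives that the $k-1$ points $P,\alpha_1(P),\dots,\alpha_{k-2}(P)$ are collinear on a line $c_i(P)$; this $c_i(P)$ carries exactly those $k-1$ points, and we must check it carries one more. Running over the $k$ points of $\ell_i$ produces $k$ such lines, and Proposition~\ref{prop:pencil} says they are concurrent at a point $X_i$ (the center $C_x$ or $C_y$, lying on the appropriate coordinate axis and depending only on $\alpha$ and on the coordinate of $\ell_i$). So $X_i$ lies on exactly the $k$ lines $\{c_i(P):P\in\ell_i\}$, giving $X_i$ degree $k$; and each $c_i(P)$ now carries its $k-1$ copies of $P$ plus the single new point $X_i$, for a total of $k$ points. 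Meanwhile each deficient point $P$ (and its images), having been on $k-1$ lines after the deletion, gains exactly one new line $c_i(P)$ (where $\ell_i$ was the unique deleted line through $P$, unique because the deleted lines are independent), bringing it back up to degree $k$; points of $\widehat{\mathcal C}$ that were on no deleted line are untouched and already have degree $k$. Thus every point has degree $k$ and every line has $k$ points. Summing over the $r$ deleted lines yields $r$ new points $X_1,\dots,X_r$ and $rk$ new lines, for a grand total of $(k-1)m+r$ points and $(k-1)(m-r)+rk=(k-1)m+r$ lines — so the point and line counts agree, as they must for an $(n_k)$ configuration with $n=(k-1)m+r$.

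The one genuinely delicate point — and the step I expect to be the main obstacle — is not the incidence arithmetic but \emph{distinctness and non-incidence}: one must guarantee that the $k-1$ copies $\alpha_j(\widehat{\mathcal C})$ are mutually disjoint as point sets and share no lines, that the new lines $c_i(P)$ are all distinct from one another and from all configuration lines, that the new points $X_i$ are distinct from each other and from all configuration points, and that no accidental extra incidences are created (e.g.\ some $X_i$ landing on a line of some $\alpha_j(\widehat{\mathcal C})$, or some $c_i(P)$ passing through an unintended configuration point). Here I would argue exactly as in the affine replication construction: the affinities $\alpha_j$ have distinct diagonal entries and fix the coordinate axes, so the images of a point off the axes are genuinely moved and separated; the lines $c_i(P)$ for $\ell_i\in\mc P$ are all perpendicular to the $x$-axis (for $\ell_i\in\mc Q$, to the $y$-axis) and distinct because the points $P\in\ell_i$ have distinct coordinates, while a line of $\alpha_j(\widehat{\mathcal C})$ is perpendicular to neither axis unless it came from $\mc P$ or $\mc Q$ — but those were exactly the lines (the remaining $p-s$ and $q-t$ parallels) we did \emph{not} delete, and a small perturbation of $h$ (using that only finitely many bad values of $h$ can cause a coincidence) removes any residual clash. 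Invoking independence of the pencils handles the interaction between the $\mc P$-side and $\mc Q$-side new elements. Once all these finitely many genericity conditions are met, the structure $\mathcal D$ described in step (6) is a bona fide geometric $(((k-1)m+r)_k)$ configuration, which is the assertion of Lemma~\ref{lemma:km1pr}. $\qed$
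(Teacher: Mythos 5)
Your proposal follows essentially the same route as the paper: the paper's ``proof'' of this lemma is nothing more than the construction steps (1)--(6) together with Propositions~\ref{prop:pencil} and~\ref{prop:series}, and your incidence bookkeeping ($r$ new points of degree $k$, $rk$ new lines with $k-1$ copies of a deleted point plus one center each, deficient points restored to degree $k$, totals $(k-1)m+r$ on both sides) matches it exactly; in fact you are more explicit than the paper about the distinctness/no-extra-incidence issues, which the paper hides in the word ``suitable'' in step (2). One factual slip in your genericity paragraph: for the affine switch the connecting lines $c_i(P)$ are \emph{not} perpendicular to the coordinate axes --- that is a feature of affine replication, whose affinities share a single axis --- whereas here the diagonal affinities $M_j$ move $P=(x_0,y_0)$ along the direction $(-x_0,y_0)$, so $c_i(P)$ is in general oblique and your perpendicularity-based argument for separating new lines from configuration lines does not apply. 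This does not sink the proof, because your fallback observation (each potential coincidence is a nontrivial condition on the affinity parameters, so only finitely many choices are bad and can be avoided) is the correct and sufficient way to handle all of these degeneracies, and is implicitly what the paper relies on as well.
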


Note that $p+q$ independent lines in an $(m_k)$ configuration covers $k(p+q) \leq m$ points. This gives an upper bound 
$p+q \leq m/k$, where the equality is attained only if $m$ divides $k$. 

In this paper we use the above Lemma \ref{lemma:km1pr} in connection with Lemma \ref{lemma:kp1} only for the case 
of a single pencil of parallel lines, such that $p = 0$. 

\begin{corollary}\label{lem:affineSwitch}
From any starting $(m_{k})$ configuration that has a pencil of $q$ parallel lines, we  apply a sequence of affine switches 
by removing $1, 2, \ldots, q$ lines in sequence, to construct a  sequence of  consecutive configurations 
\[  [ (((k-1)m+r)_{k})  ]_{r=1}^{q} = [  AS(m, k, r) ]_{r=1}^{q}. \]
\end{corollary}

This collection of consecutive configurations is represented by the notation $\AS(m, k, q)$. 
That is, $\AS(m,k,q) =  [  AS(m, k, r) ]_{r=1}^{q}.$

\begin{example}
Figure~\ref{fig:(k-1)m+} illustrates an application of this construction to the Pappus configuration $\mc{P}$ 
(cf.\ Figure~\ref{fig:pappus}).
Removing only one line from the horizontal pencil results in a $(19_3)$ configuration, shown in Figure \ref{fig:(k-1)m+}(a). 
Removing two or three lines results in a $(20_3)$ or $(21_3)$ configuration, respectively, shown in Figures \ref{fig:(k-1)m+}(b) 
and \ref{fig:(k-1)m+}(c). 
(Observe that since the Pappus configuration has 9 points, the maximal total number of lines in independent pencils is 3, 
since any three disjoint lines in the configuration contain all the points of the configuration.) Taken together the three 
configurations, we have: $[(19_{3}), (20_{3}), (21_{3})] = \AS(9,3,3)$. 
\end{example}

\begin{figure}[htbp]
\begin{center} \hskip -10pt 
\subfigure[A $(19_{3})$ configuration]
   {\includegraphics[width=0.3\textwidth]{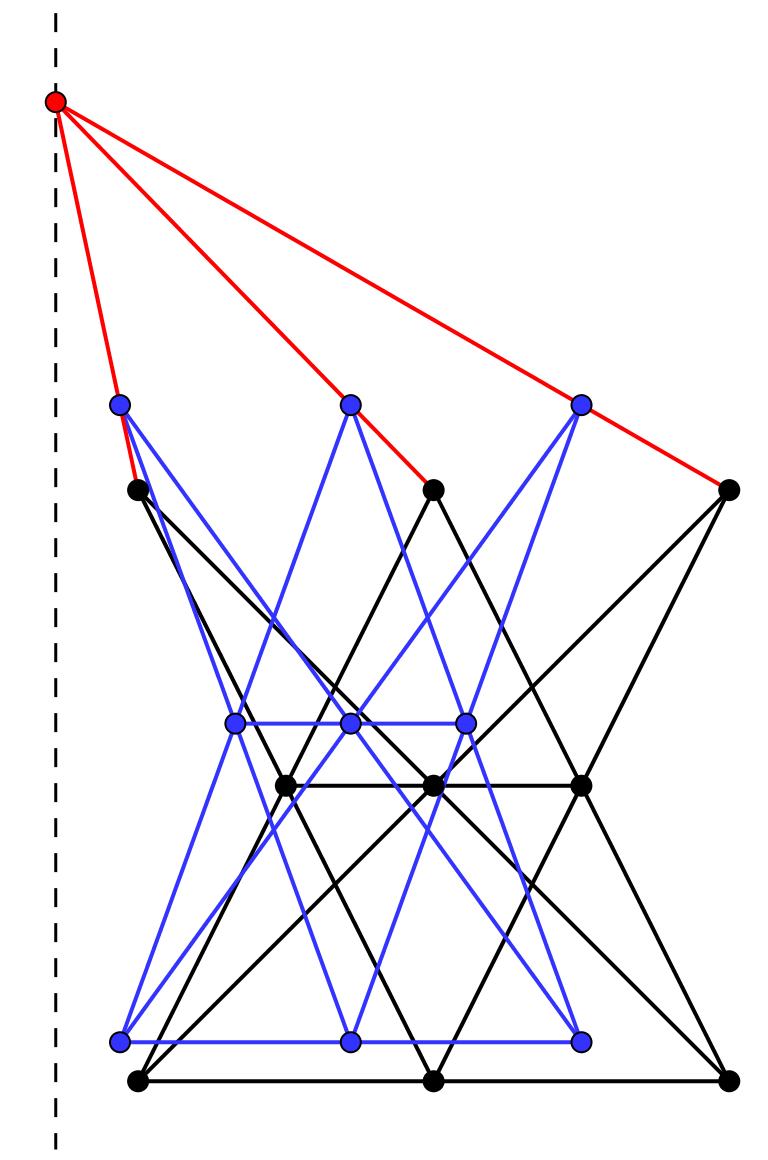}} \hskip 6pt
\subfigure[A $(20_{3})$ configuration]
   {\includegraphics[width=0.3\textwidth]{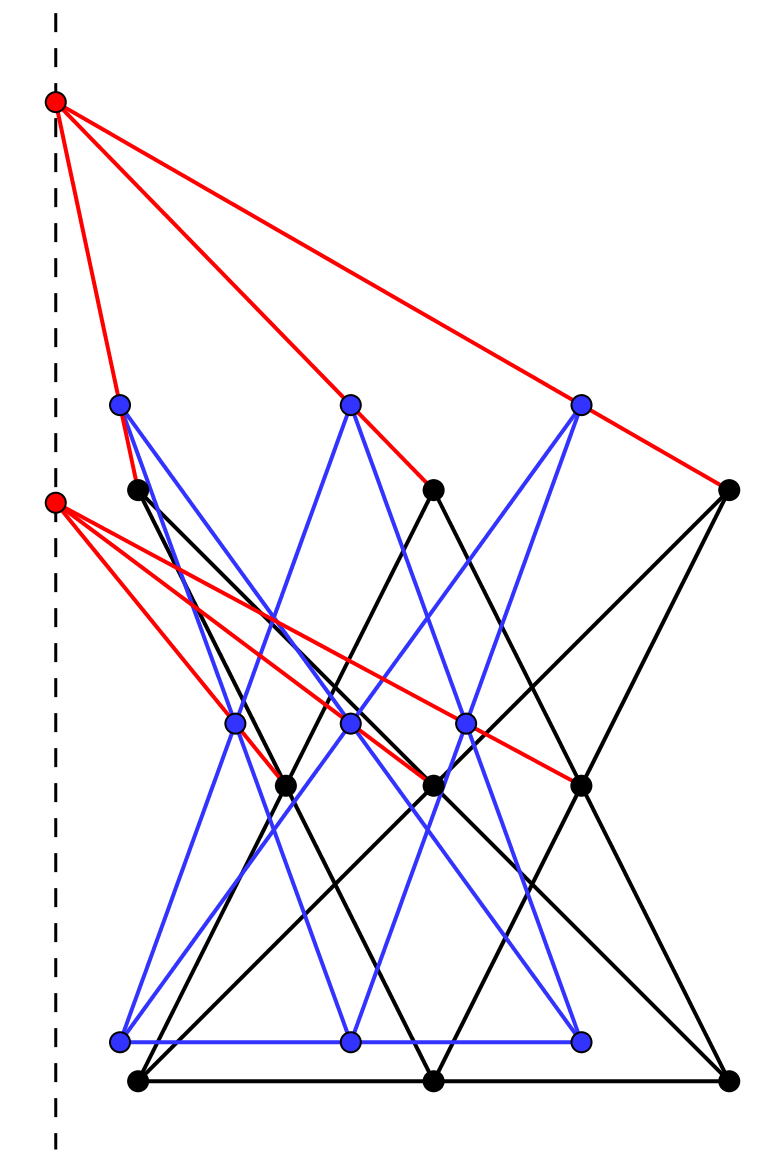}} \hskip 6pt
\subfigure[A $(21_{3})$ configuration]
   {\includegraphics[width=0.3\textwidth]{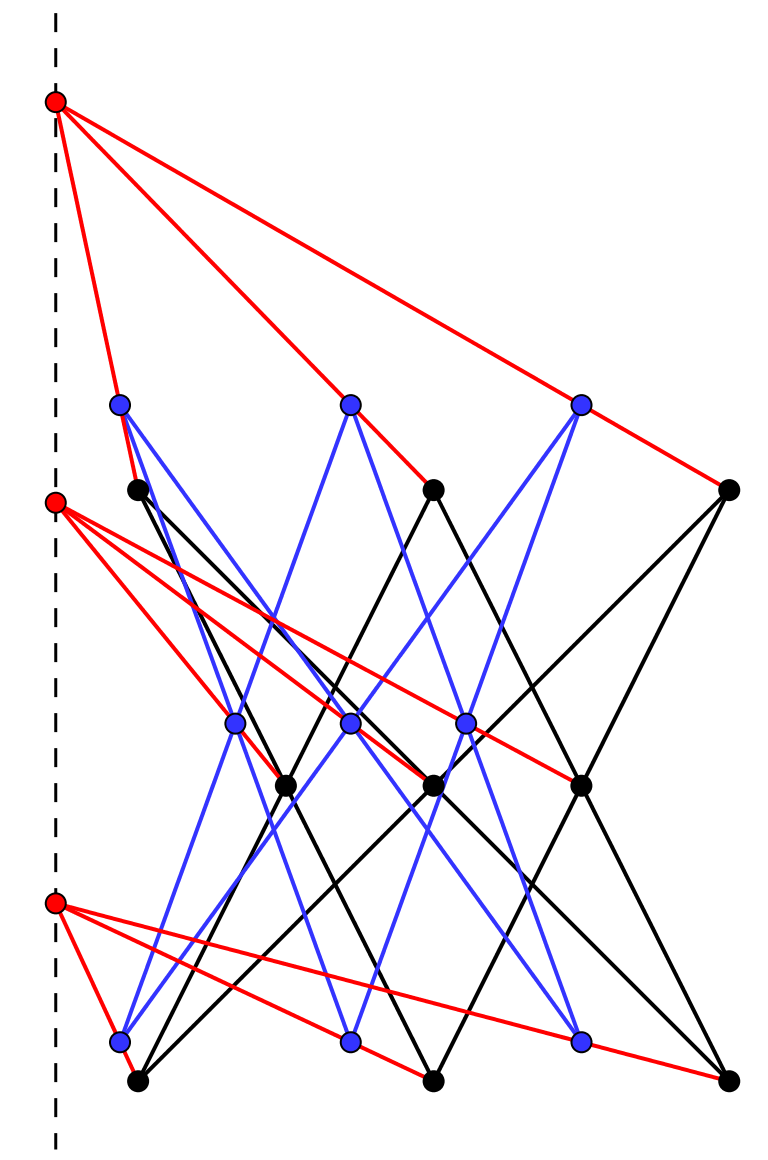}}
\caption{Configurations $(19_{3})$, $(20_{3})$, and $(21_{3})$, constructed from applying the affine switch construction to the realization 
of the Pappus configuration with a pencil of 3 parallel lines, shown in Figure \ref{fig:pappus}, by deleting one, two, or three lines respectively.
(The vertical axis of affinity, denoted by dashed line, does not belong to the configuration.)}
\label{fig:(k-1)m+}
\end{center}
\end{figure}

Since axial affinities play a crucial role in the constructions described above, we recall a basic property. 
The proof of the following proposition is constructive, hence it provides a simple tool for a basically synthetic 
approach to these constructions, which is especially useful when using dynamic geometry software to construct these configurations.

\begin{prop} \label{prop:determined}
An axial affinity $\alpha$ is determined by its axis and the pair of points $(P,P')$, where $P$ is any point not lying on the axis,
and $P'$ denotes the image of $P$, i.e.\ $P'=\alpha(P)$.
\end{prop}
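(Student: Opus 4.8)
The plan is to recover the affinity synthetically by tracking what it does to a generic point. First I would fix the axis $A$ and the given pair $(P,P')$ with $P\notin A$. The key observation is that an axial affinity fixes $A$ pointwise and sends each line through $P$ that meets $A$ to the line through $P'$ and the same point of $A$; equivalently, it preserves the pencil of lines through any point of $A$. So for an arbitrary point $Q$ not on $A$ and not on the line $PP'$, I would choose an auxiliary line through $Q$ meeting $A$ in a point $R_1$, and a second line through $Q$ meeting $A$ in a point $R_2$; the images of these two lines are forced to be $R_1P'$-type lines only after we also know where one more point on each goes. The cleaner route is to use the direction of affinity: since $\alpha$ is axial, the segment from any point to its image is parallel to the segment $\overrightarrow{PP'}$ (all "traces'' are parallel, this being the defining feature of an axial affinity, whether a strain, shear, or oblique affinity). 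Hence the direction $d=\overrightarrow{PP'}$ is determined by the data.

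With $A$ and $d$ in hand, I would then give the explicit construction of $\alpha(Q)$ for arbitrary $Q$: draw the line $g$ through $P$ and $Q$; it meets $A$ in a point $S$ (if $PQ\parallel A$, instead use the parallel to $A$ through $P$, handled below). Since $S\in A$ is fixed and $P\mapsto P'$, the image of $g$ is the line $g'=SP'$. Now draw the line through $Q$ in direction $d$; it meets $g'$ in a unique point, and that point must be $\alpha(Q)$, because $\alpha(Q)$ lies on $g'=\alpha(g)$ and on the trace line through $Q$ parallel to $d$. This pins down $\alpha(Q)$ uniquely from the given data, proving the claim. The degenerate case $PQ\parallel A$: then $g\cap A$ is at infinity, fixed by $\alpha$, so $\alpha(g)$ is the line through $P'$ parallel to $A$; intersecting it with the trace through $Q$ in direction $d$ again yields $\alpha(Q)$.

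One subtlety I would address explicitly: the construction produces a well-defined point $\alpha(Q)$ for each $Q$, but to conclude that $\alpha$ is \emph{determined} I also want that any two axial affinities with the same axis and the same value at $P$ agree everywhere — which is exactly what the uniqueness in the intersection steps above gives, since every step (choosing $g$, forming $g'$, forming the trace through $Q$, intersecting) is forced. I should also remark that $d$ is not parallel to $A$ unless $\alpha$ is a shear, and the argument covers that case too since a trace through $Q$ parallel to $d\parallel A$ still meets $g'$ in one point (as $g'\ne$ that trace whenever $Q\notin A$).

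The main obstacle is bookkeeping the degenerate incidences — when $Q$ lies on line $PP'$, or on $A$, or when $PQ$ is parallel to $A$, or (in the shear case) when the trace direction coincides with $A$'s direction — and checking that in each such case the prescribed intersection is still a single well-defined point, so that the construction, and hence $\alpha$, is genuinely determined. Everything else is the standard fact that an axial affinity fixes its axis pointwise and moves every other point along a fixed direction, which I would invoke directly.
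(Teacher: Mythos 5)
Your construction is essentially the paper's own proof: you determine the trace direction from $\overrightarrow{PP'}$, send the line $PQ$ to the line through its fixed intersection with the axis and $P'$, and recover $Q'$ as the intersection with the trace through $Q$, including the same special treatment when $PQ$ is parallel to the axis. The extra degenerate cases you flag (e.g.\ $Q$ on the line $PP'$, the shear case) are reasonable to note but do not change the argument, so the proposal matches the paper's approach.
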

\begin{proof}
In what follows, for any point $X$, we denote its image $\alpha(X)$ by $X'$. 
Let $Q$ be an arbitrary point not lying on the axis and different from $P$. 
Take the line $PQ$, and assume that it intersects the axis in a point $F$
(see Figure~\ref{fig:AffineConstr}a).
Thus $PQ=FP$. Take now the line $F'P'$, i.e., the image of $FP$. Since $F$ 
is a fixed point, i.e.\ $F'=F$, we have $F'P'=FP'$. This means that $Q'$ lies 
on $FP'$, i.e.\ $P'Q'=FP'$. To find $Q'$ on $FP'$, we use the basic property 
of axial affinities that for all points $X$ not lying on the axis, the lines $XX'$
are parallel with each other (we recall that the direction of these lines is called 
the \emph{direction} of the affinity). Accordingly, a line passing through $Q$
which is parallel with $PP'$ will intersect $FP'$ precisely in the desired point 
$Q'$.

\begin{figure}[h!]
\begin{center}
\subfigure[]
{\includegraphics[width=.5\textwidth]{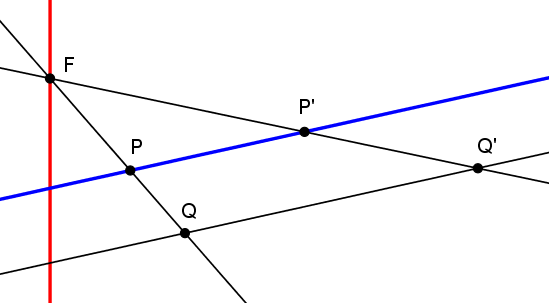}} \hskip 20pt
\subfigure[]
{\includegraphics[width=.35\textwidth]{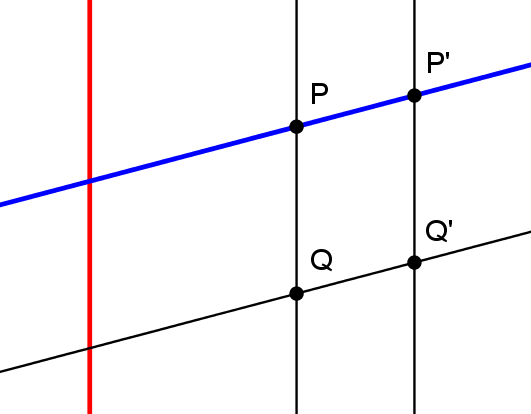}}
\caption{Construction of the image of a pont $Q$ under axial affinity; the axis is the vertical red line, the direction of affinity is given 
by the blue line. Here we use oblique affinity, but the construction given in the proof is the same in any other types of axial 
affinities.}
\label{fig:AffineConstr}
\end{center}
\end{figure}

On the other hand, if $PQ$ is parallel with the axis, then clearly so is $P'Q'$. 
In this case $Q'$ is obtaned as the fourth vertex of the parallelogram determined 
by $P'$, $P$ and $Q$ (see Figure~\ref{fig:AffineConstr}b).
\end{proof}

\begin{remark}
In using integer parameters $h$ and $j$ above, we followed Gr\"unbaum's original concept~\cite{Gru2009b} (as mentioned 
explicitly at the beginning of this subsection). However, the theory underlying Propositions~\ref{prop:pencil} and~\ref{prop:series} 
makes possible using continuous parameters as well, so that the procedure becomes in this way much more flexible. In what follows
we outline such a more general version, restricted to using only one pencil of lines to be deleted. 
\end{remark}

Start again with a configuration $\mathcal C$, and assume that the pencil $\mathcal P$ is in horizontal position; accordingly, the axis 
that we use is in vertical position (see e.g.\ Figure~\ref{fig:(k-1)m+}). Choose a line $\ell$ in $\mathcal P$, and a configuration point 
$P_0$ on $\ell$; then, remove $\ell$. $P_0$ will be the initial point of our construction (e.g., in Figure~\ref{fig:(k-1)m+} the ``north-west" 
(black) point of the starting configuration). Choose a point $C_{\ell}$ on the axis such that the line $C_{\ell}P_0$ is not perpendicular to 
the axis (in our example, this is the red point in Figure~\ref{fig:(k-1)m+}a).

Now let $t\in\mathbb R$ be our \emph{continuous parameter}. Take the point 
\begin{equation} \label{eq:AffCombin}
P=tC_{\ell}+(1-t)P_0;
\end{equation}
 thus $P$ is a point on the line $C_{\ell}P_0$, and as $t$ changes, $P$ slides along this line. Moreover, by Proposition~\ref{prop:determined}
we see that the pair of points $(P_0,P)$ determines two orthogonal affinities whose axes are perpendicular to each other. In particular, the
axes are precisely the coordinate axes. These affinities act simultaneously, i.e.\ $P_0$ is sent to $P$ by their (commuting) product. Using coordinates,
such as $P_0(x_0,y_0)$ and $P(x,y)$, we also see that the ratio of these affinities is $y/y_0$ (for that with horizontal axis), respectively $x/x_0$ 
(for that with vertical axis). (Note that these ratios, using the relation~(\ref{eq:AffCombin}), can also be expressed by the parameter $t$ and by 
the prescribed coordinates of $P_0$ and $C_{\ell}$. Furthermore, similarly, the matrix~(\ref{matrix}) above can also be parametrized by $t$; 
we omit the details.)

It is easily checked that both Proposition~\ref{prop:pencil} and Proposition~\ref{prop:series} remains valid with this continuous parameter $t$.
Hence, for any $P$, we can construct the corresponding affine image of $\mathcal C$ (or its substructures $\hat {\mathcal C}$ with lines of any 
number $r$ removed), together with the new lines (which are denoted by red in our example of Figure~\ref{fig:(k-1)m+}). In particular, in case 
of $k$-configurations, we need to choose altogether $k-2$ points on the line $C_{\ell}P_0$ (note the for $t=0$, the starting copy $\mathcal C$
returns, for $t=1$ the image of $\mathcal C$ collapses to a segment within the $y$-axis, and for a third value depending on the slope of $C_{\ell}P_0$,
it collapses to a segment within the $x$-axis; these cases thus are to be avoided).


\section{Proof of the Main Theorem}\label{sec:mainThms}


In this section we prove the main theorem of our paper.   For notational convenienence, given integers $a < b$, let $[a:b]$ denote 
the range $\{a, a+1, \ldots, b\}$. Similarly, for integer function $f(s)$ the range
$\{f(a), f(a+1), \ldots, f(b)\}$ will be denoted by $[f(s)]^b_{s=a}$. 
The crucial step in the proof will be provided by the following Lemma.

\begin{lemma}\label{mainLemma}
Assume that for some $k \geq 3$, $N_{k-1}$ exists and that $\bar{N}_{k-1}$ is any known upper bound for it. 
Then $N_k$ exists and:
$\bar{N}_k =  (k^2-1)\max(\bar{N}_{k-1},k^2-2)$ is an upper bound for it. Moreover, if we have two upper bounds, say $\bar{N}_{k-1} < \tilde{N}_{k-1}$
for $N_{k-1}$, the better one will produce a better upper bound for $N_{k}$.
\end{lemma}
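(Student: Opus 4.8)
The plan is to combine Lemma~\ref{lemma:kp1} (affine replication) and Corollary~\ref{lem:affineSwitch} (affine switch bands) to manufacture, for every sufficiently large $n$, a geometric $(n_k)$ configuration, starting from the assumed existence of geometric $(m_{k-1})$ configurations for all $m \geq \bar{N}_{k-1}$. First I would observe that affine replication $\AR(m,k)$, applied to any $(m_{k-1})$ configuration, yields a $((k+1)m)_k$ configuration that comes equipped with a pencil of $m$ parallel lines; this is exactly the hypothesis needed to feed into the affine switch construction. So for each $m \geq \max(\bar{N}_{k-1}, k^2-2)$ (the second term ensuring $m$ is large enough that the technical conditions are harmless — in particular that there is at least one pencil line to delete and that $(k+1)m$ exceeds whatever minimal threshold is implicit), I apply $\AR(m,k)$ to get a $((k+1)m)_k$ configuration with a pencil of $q = m$ parallel lines, and then invoke Corollary~\ref{lem:affineSwitch} / $\AS((k+1)m, k, m)$ to obtain geometric $(n_k)$ configurations for every $n$ in the band
\[
n \in \bigl[\, (k-1)(k+1)m + 1 \;:\; (k-1)(k+1)m + m \,\bigr] = \bigl[\,(k^2-1)m + 1 \;:\; k^2 m \,\bigr].
\]

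The key step is then a covering argument: I must check that as $m$ ranges over all integers $\geq M := \max(\bar{N}_{k-1}, k^2-2)$, the union of the intervals $[(k^2-1)m+1 : k^2 m]$ contains every integer from some point on, with no gaps. Consecutive intervals are $[(k^2-1)m+1 : k^2m]$ and $[(k^2-1)(m+1)+1 : k^2(m+1)]$; they abut or overlap provided the left endpoint of the $(m+1)$-st interval is at most one more than the right endpoint of the $m$-th, i.e.
\[
(k^2-1)(m+1) + 1 \;\leq\; k^2 m + 1,
\]
which simplifies to $k^2 - 1 \leq m$, i.e.\ $m \geq k^2-1$. Since $M \geq k^2-2$ might be just one short, I would start the chain at $m_0 := \max(\bar{N}_{k-1}, k^2-1)$ — note $k^2-1 = (k^2-1)\cdot 1$, consistent with the claimed formula — so that all intervals from $m = m_0$ onward overlap consecutively and their union is the full ray $[(k^2-1)m_0 + 1, \infty)$. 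This shows every $n \geq (k^2-1)m_0 + 1$ is $k$-realizable, hence $N_k$ exists and
\[
\bar{N}_k \;=\; (k^2-1)\,m_0 \;=\; (k^2-1)\max(\bar{N}_{k-1},\, k^2-1)
\]
is an upper bound. (I would double-check against the paper's stated $k^2-2$ versus my $k^2-1$; if the intended formula really uses $k^2-2$, then the first interval $m = k^2-2$ must already reach down far enough, which happens iff the \emph{previous} band from $\AR$ applied at $m-1$, or an auxiliary small configuration, plugs the single missing value — a minor bookkeeping point I would reconcile when writing the full proof.)

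The monotonicity clause is then immediate: if $\bar{N}_{k-1} < \tilde{N}_{k-1}$ are two upper bounds for $N_{k-1}$, then $\max(\bar{N}_{k-1}, k^2-1) \leq \max(\tilde{N}_{k-1}, k^2-1)$, so the resulting bound $(k^2-1)\max(\bar{N}_{k-1},k^2-1)$ for $N_k$ is no larger than $(k^2-1)\max(\tilde{N}_{k-1},k^2-1)$; and it is strictly smaller whenever $\bar{N}_{k-1} \geq k^2-1$. I expect the main obstacle to be not the arithmetic but the verification that the affine switch construction's hypotheses are genuinely met for the configuration produced by $\AR(m,k)$ — specifically, that the pencil of $m$ parallel lines it produces is "independent" in the sense required (the pencil lines share no configuration points, so one can delete up to $m$ of them), and that the edge cases where $m$ is near the threshold don't violate the $p+q \leq m/k$-type bound or any non-degeneracy condition (i–iii) from the $\AR$ construction. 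Once those are confirmed, the induction runs cleanly with base case $k=3$ supplied by Proposition~\ref{thm:n3} ($N_3 = 9$), yielding Theorem~\ref{mainTheorem}.
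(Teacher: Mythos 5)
Your construction is the same as the paper's (affine replication $\AR(m,k)$ to produce a $(((k+1)m)_k)$ configuration with a pencil of $m$ parallel lines, then the affine-switch band $[(k^2-1)m+1 : k^2m]$, then a covering argument over $m$), but as written it only establishes the weaker bound $(k^2-1)\max(\bar{N}_{k-1},k^2-1)$, not the stated $(k^2-1)\max(\bar{N}_{k-1},k^2-2)$; for instance, with $k=4$ and $\bar{N}_3=9$ your argument gives $225$ where the lemma claims $210$. The missing idea — which is exactly the step the paper makes explicit — is that the \emph{direct} outputs of affine replication are already $k$-realizable: every number of the form $(k+1)Y$ with $Y\geq\bar{N}_{k-1}$ is realizable by Lemma~\ref{lemma:kp1} alone, and in particular $(k^2-1)(m+1)=(k+1)\cdot(k-1)(m+1)$ is realizable, since $(k-1)(m+1)>\bar{N}_{k-1}$. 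The gap between the band for $m$ (ending at $k^2m$) and the band for $m+1$ (starting at $(k^2-1)(m+1)+1$) has size $k^2-1-m$, so when $m=k^2-2$ the gap is the single number $(k^2-1)(m+1)$, which is precisely one of these replication outputs. Filling that hole (equivalently, extending each band back one step so it starts at $(k^2-1)(m+1)$) lowers the adjacency threshold from your $m\geq k^2-1$ to $m\geq k^2-2$, and the same observation supplies realizability of the left endpoint $(k^2-1)\max(\bar{N}_{k-1},k^2-2)$ itself, fixing the off-by-one in your final sentence where you conclude the bound $(k^2-1)m_0$ after only proving realizability from $(k^2-1)m_0+1$ onward.

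Your parenthetical attempt to reconcile the $k^2-2$ versus $k^2-1$ discrepancy points at ``the previous band from $\AR$ applied at $m-1$,'' which is not the right mechanism: neighboring bands do not reach the missing value; the plug comes from the arithmetic progression of multiples of $k+1$ produced by affine replication in equation~\eqref{eq:sequence-2} of the paper. With that single observation inserted, your argument coincides with the paper's proof, and the monotonicity clause then follows from the formula exactly as you state it (noting only that ``better'' must be read non-strictly when both bounds lie below $k^2-2$, since the maximum then swallows them).
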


This Lemma will be proven with the tools from previous section by  applying affine replication and affine switch.
More precisely, Lemma \ref{lemma:kp1} and Corollary  \ref{lem:affineSwitch} will be used.


\begin{proof}[Proof of Lemma \ref{mainLemma}]

Let $\bar{N}_{k-1}$ denote any known upper bound for $N_{k-1}$. By definition, the  sequence 
of consecutive numbers
\begin{equation} \label{eq:sequence-1}
a=\bar{N}_{k-1}, a+1, \ldots, a+s, \ldots
\end{equation}
are all $(k-1)$-realizable; in other words, for each $s$, $s=0,1,\dots,$ there exists a geometric $((a+s)_{k-1})$ 
configuration (recall the definition of realizability, given in the Introduction). Apply affine replication to these configurations; 
by Lemma~\ref{lemma:kp1}, the  sequence of numbers:
\begin{equation} \label{eq:sequence-2}
(k+1)a, (k+1)(a+1),\ldots, (k+1)(a+s), \dots
\end{equation}
are all $k$-realizable. Note that this is an arithmetic sequence with difference $(k+1)$.
Furthermore, observe that for each $X \geq a$, the geometric $k$-configuration realizing the number $(k+1)X$ 
that was produced by affine replication has $X$ new parallel lines. 
Hence, we can apply a sequence of affine switch constructions to each of these configurations $((k+1)X_{k})$.
By Corollary \ref{lem:affineSwitch}, the sequences $\AS((k+1)X, k, X)$ of configurations is produced.
It follows that the sequences of numbers
\begin{multline}\label{eq:sequence-3}
[(k-1) (k+1) a+1:(k-1) (k+1)a+a], \\
[(k-1) (k+1)(a+1)+1:(k-1) (k+1) (a+1)+(a+1))], \\
[(k-1) (k+1)(a+2)+1:(k-1) (k+1) (a+2)+(a+2))],\ldots
\end{multline}
are all $k$-realizable.

Observe that from the initial outputs of affine replication, $n = X(k+1)$ is realizable as long as $X \geq \bar{N}_{k-1}$. 
Thus, every ``band'' of consecutive configurations produced by affine switches can be extended back one step, so there 
exists a band of consecutive $k$-configurations 
\[[(k-1)(k+1)X:(k-1) (k+1)X + X)] \]
for each initial configuration $(X_{k-1})$. 
Another way to say this is that we can fill a hole of size 1 between the bands of configurations listed in equation \eqref{eq:sequence-3} 
using the output of the initial affine replications, listed in equation \eqref{eq:sequence-2}.

To determine when we have either adjacent or overlapping bands, then, it suffices to determine when the last element of one band is 
adjacent to the first element of the next band; that is, when 
\[(k-1) (k+1) X+X + 1\geq (k-1) (k+1) (X+1).\]
It follows easily that $X \geq k^{2}-2$.

Hence, as long as we are guaranteed that a sequence of consecutive configurations $(q_{k-1})$, $((q+1)_{k-1}), \ldots$ 
exists, it follows that we are guaranteed the existence of consecutive $k$-config\-u\-ra\-tions $Q_{k}, (Q+1)_{k}, \ldots,$ 
where $Q = (k^{2}-1)(k^{2}-2)$. 
However, since we do not know whether that consecutive sequence exists, in the (extremely common) case 
where $\bar{N}_{k-1} > (k^{2}-1)(k^{2}-2)$, the best that we can do is to conclude that
\[ N_{k} \leq (k^{2}-1) \max\{ \bar{N}_{k-1}, k^{2} - 2\}.\]
\end{proof}

This result gives rise to an elementary proof by induction for the main theorem.

\begin{proof}[Proof of Theorem \ref{mainTheorem}] 
Let $s = 2$.  The number $N_{s} = N_2 = 3$ exists. This is the basis of induction. Now, let $s = k -1$. By assumption, $N_{k-1}$ exists 
and some upper bound $\bar{N}_{k-1}$ is known.  By Lemma \ref{mainLemma}, $\bar{N}_k =  (k^2-1)\max(\hat{N}_{k-1},k^2-2)$ is 
an upper bound for $N_k$ . Therefore $N_k$ exists and the induction step is proven.
\end{proof}

Recall that we let $N^{R}_{k}$ denote the best known upper bound for $N_{k}$. The same type of result follows if we start 
with the best known upper bound $N^R_s$ for some $s \geq 2$. However, the specific numbers for upper bounds depend on 
our starting condition. Table \ref{tab:theorem1bounds} shows the difference if we start with $s = 2,3,4$. The reason we are 
using only these three values for $s$ follows from the fact that only $N^R_s, 2 \leq s \leq 4$ have been known so far.

\begin{table}[htp]
\caption{Bounds on $N_{k}$ from iterative applications of Lemma \ref{mainLemma}. Different bounds are produced if the iteration 
is started with $N^R_{2} = N_{2} = 3, N^R_{3}= N_{3} = 9$ or with $N^R_{4} = 24$. Boldface numbers give best bounds using  
this method and current knowledge.}
\begin{center}

\begin{tabular}{c|r|r|r|r}

$k$ & $\bar{N}_{k}$ with $N^R_{2} = 3$ &$\bar{N}_{k}$ with $N^R_{3} = 9$ & $\bar{N}_{k}$ with $N^R_{4} = 24$ &$N^R_{k}$ \\[3 pt] \hline
$2$ &{\bf 3} & - & - & {\bf 3}\\
$3$ &56& {\bf 9} & - & {\bf 9}\\
$4$ &840& 210 & {\bf 24} & {\bf 24} \\
$5$ &\numprint{20160}&\numprint{5040} & {\bf 576}& {\bf 576}\\
$6$ &\numprint{705600} &\numprint{176400}& {\bf \numprint{20160}}& {\bf \numprint{20160}}\\
$7$ &\numprint{33868800} &\numprint{8467200}& {\bf \numprint{967680}}& {\bf \numprint{967680}}\\
$8$ & \numprint{2133734400}&\numprint{533433600}& {\bf \numprint{60963840}}& {\bf \numprint{60963840}}\\
$9$ &\numprint{170698752000} &\numprint{42674688000}& {\bf \numprint{4877107200}}&{\bf \numprint{4877107200}}\\
$10$ &\numprint{16899176448000} &\numprint{4224794112000}&{\bf  \numprint{482833612800}}&{\bf \numprint{482833612800}}\\

\end{tabular}

\end{center}
\label{tab:theorem1bounds}
\end{table}%
%
The rightmost column of Table \ref{tab:theorem1bounds}  summarises the information given in other columns by computing the minimum in 
each row and thereby gives the best bounds that are available using previous knowledge and direct applications of  Lemma \ref{mainLemma}.

If new knowledge about best current values of $N^R_k$ for small values of $k$ becomes available, we may use similar applications of  
Lemma \ref{mainLemma} to improve the bounds of the last column. Since, the values for $k=2$ and $k=3$ are optimal, the first candidate 
for improvement is $k = 4$. A natural question is what happens if someone finds a geometric $(23_4)$ configuration. In this case Lemma 
\ref{mainLemma} would give us for $k = 5$ the bound $(k^2-1)\max(N^{R}_{k-1},k^2-2) = (5^2-1)\max(20,5^2-2) = 24 \times 23 = 552$, 
an improvement over 576. An alternative feasible attempt to improve the bounds would be to use other methods in the spirit of Gr\"unbaum 
calculus to improve the current bound 576 for $k=5$. However, there is another approach that can improve the numbers even without 
introducing new methods. It is presented in the next section.


\section{Improving the bounds}


Recall that $N^{R}_{3} = N_{3} = 9$, and $N^{R}_{4} = N_{4} = 21$ or $24$, according to whether or not a 
$(23_{4})$ configuration exists. If we apply the procedure in Lemma \ref{mainLemma} using as input information 
$N_{3} = N^{R}_{3} = 9$ (that is, beginning with a sequence of $3$-configurations $(9_{3}), (10_{3}), (11_{3}) \ldots$), 
Lemma \ref{mainLemma} says that 
\[ N_{k}\leq (k^{2}-1)\max\{N^{R}_{k-1},k^{2} - 2\} \implies N_{4} \leq (15)\max\{9, 14\} = 210. \]

However, we know observationally that $N_{4} = 21$ or $24$. Thus, we expect that Theorem \ref{mainTheorem} 
is likely to give us significant overestimates on a bound for $N_{k}$ for larger $k$.

For $k = 5$, the best we can do at this step with these constructions is the bound given by Lemma \ref{mainLemma}, 
beginning with the consecutive sequence of $4$-configurations $((24_{4}), (25_{4}), (26_{4}), \ldots)$.
In this case, Lemma \ref{mainLemma} predicts that 
$N_{5} \leq (24)\max(24, 23) = 576.$
In a subsequent paper, we will show that this bound can be significantly decreased by incorporating other Gr\"unbaum-calculus-type constructions 
and several ad hoc geometric constructions for 5-configurations.

However, we  significantly decrease the bound on $N_{k}$ for $k \geq 6$ by refining the construction sequence given in Lemma \ref{mainLemma}:
instead of beginning with $N^{R}_{k-1}$ determined by iterative applications of the sequence in Lemma \ref{mainLemma}, we consider all possible 
sequences determined by applying a series of affine replications, followed by a final affine switch.

First we introduce  a function $N(k,t,a,d)$ with positive integer parameters $k,t,a,d$ and $t < k$.
Define for $t < k-1$: 
\[N(k, t, a, d):= (k^{2} - 1)\left(\frac{k!}{(t+1)! }\right) \max\left\{  a, (k^{2}-1)d \right\}, \] 

and for $t = k-1$: 
\[N(k, k-1, a, d):= (k^{2} - 1) \max\left\{ a, (k^{2}-1)d - 1 \right\}. \]

This value $N(k, t, a, d)$ is precisely the smallest $n$ after which we are guaranteed there exists a sequence of consecutive $k$-configurations 
produced by starting with an initial sequence of $t$-configurations $a, a+d, ..., $ and sequentially applying affine replications followed by a final 
affine switch as described above. 
 
The following Lemma gives us a quite general and powerful tool for bound improvements without making any changes in constructions.

\begin{lemma}\label{lemma:main2} Let $t \geq 2$ be an integer and let $a, a+d, a+2d, \ldots$ be an arithmetic sequence with integer initial 
term $a$ and integer difference $d$ such that for each $s = 0,1, \ldots$ geometric configurations $((a+sd)_t)$ exist.  
Then for any $k > t$  the value $ N(k, t, a, d)$  defined above is an upper bound for  $N_k$; i.e.,
$N(k, t, a, d) \geq N_k$. 
\end{lemma}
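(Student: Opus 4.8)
The plan is to prove Lemma~\ref{lemma:main2} by iterating affine replication $t$ fewer than $k$ times, tracking two things along the way: the arithmetic progression of realizable numbers, and the number of parallel lines in the pencil each output configuration carries. Starting from the sequence $a, a+d, a+2d, \dots$ of realizable numbers in level $t$, one application of $\AR(\,\cdot\,, t+1)$ multiplies each term by $(t+2)$, so the realizable numbers in level $t+1$ form the progression $(t+2)a,\ (t+2)(a+d),\dots$ with difference $(t+2)d$; crucially, by Lemma~\ref{lemma:kp1} the configuration realizing $(t+2)X$ carries a pencil of $X$ parallel lines. Repeating affine replication to climb from level $t$ up to level $k-1$ gives, after the full chain, a progression of $(k-1)$-realizable numbers $a' , a'+d', \dots$ where $a' = \bigl(\tfrac{k!}{(t+1)!}\bigr)a$ and $d' = \bigl(\tfrac{k!}{(t+1)!}\bigr)d$ (the product $(t+2)(t+3)\cdots k$ telescopes to $k!/(t+1)!$), and each such configuration $((k-1)X')$ with $X'$ in this progression carries a pencil of $X'/k$ parallel lines — wait, more carefully, the last replication step produces configuration $((k-1)X')_{k-1}$... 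I should instead phrase it as: after the penultimate replication we have $(k-2)$-configurations on a progression, and the final replication $\AR(\,\cdot\,,k-1)$ sends a $(k-2)$-configuration on $Y$ points to a $(k-1)$-configuration on $(k-1)Y$ points with a pencil of $Y$ lines.

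Once we are at level $k-1$ with a progression $b, b+e, b+2e, \dots$ of realizable numbers, where each $((b+se)_{k-1})$ carries a pencil of parallel lines, I apply one more affine replication $\AR(\,\cdot\,,k)$ to reach level $k$: each $((b+se)_{k-1})$ yields a $((k+1)(b+se))_k$ configuration with a pencil of $(b+se)$ parallel lines, and then I apply the affine switch band $\AS((k+1)(b+se),\,k,\,b+se)$ exactly as in the proof of Lemma~\ref{mainLemma}. This produces, for each $s$, a band of consecutive $k$-realizable numbers from $(k-1)(k+1)(b+se)+1$ up to $(k-1)(k+1)(b+se)+(b+se)$, and — exactly as before — the initial affine-replication output $(k+1)(b+se)$ can be appended to extend each band downward by one, so the band runs from $(k-1)(k+1)(b+se)$ onward. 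The bands for consecutive values of $s$ (which differ by $e$ in the seed) are adjacent or overlapping precisely when $(k-1)(k+1)(b+se)+(b+se)+1 \ge (k-1)(k+1)(b+(s+1)e)$, i.e. when $b+se \ge (k^2-1)e - 1$; since $b+se \ge b$ is smallest at $s=0$, it suffices that $b \ge (k^2-1)e-1$. This is the source of the ``$-1$'' in the $t=k-1$ case and of the ``$\max$'' in general. Translating $b,e$ back through the telescoped product $k!/(t+1)!$ applied to $a,d$ — and in the $t=k-1$ case the product is empty, so $b=a$, $e=d$ directly — and taking the larger of the two constraints (the threshold $X \ge (k^2-1)e - 1$ versus simply $X \ge b$, the first realizable seed) gives exactly the stated formulas for $N(k,t,a,d)$.

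I would organize the write-up as: (1) a claim that $r$ successive affine replications starting from an arithmetic progression of $t$-realizable numbers with ratio $d$ yields an arithmetic progression of $(t+r)$-realizable numbers with initial term scaled by $(t+2)(t+3)\cdots(t+r+1)$ and ratio scaled by the same factor, with the top-level configurations carrying a suitable pencil — proved by a short induction invoking Lemma~\ref{lemma:kp1}; (2) specialize to $r = k-1-t$ to land in level $k-1$; (3) quote the $\AR$-then-$\AS$ band argument from the proof of Lemma~\ref{mainLemma} verbatim to get overlapping bands once the seed is large enough; (4) do the arithmetic identifying the threshold and simplifying $(t+2)\cdots k = k!/(t+1)!$. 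The main obstacle is bookkeeping rather than mathematics: one must be careful that the pencil-size needed for the final affine switch (namely $b+se$, the number of parallel lines guaranteed by the last $\AR$) is tracked correctly through the chain, and that the ``extend the band down by one'' trick still applies — it does, because the number $(k+1)(b+se)$ is the affine-replication output and is realizable for every $s$, exactly as in Lemma~\ref{mainLemma}. A secondary point to handle cleanly is the degenerate case $t = k-1$, where the replication chain is empty and the lemma reduces almost verbatim to Lemma~\ref{mainLemma} with $(a,d)$ in place of $(\bar N_{k-1},1)$; isolating that case at the start will keep the general argument uncluttered.
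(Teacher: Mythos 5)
Your proposal follows essentially the same route as the paper's own proof: a chain of $k-t$ affine replications (telescoping to the factor $k!/(t+1)!$ and then $(k+1)!/(t+1)!$), a final affine switch producing bands, the extension of each band back by one, and the same merge threshold $a+sd \geq (k^{2}-1)d - (t+1)!/k!$, which correctly explains both the $\max$ and the ``$-1$'' in the $t=k-1$ case. One small slip to fix in the write-up: the number that extends a band downward is $(k-1)(k+1)(b+se) = (k+1)\cdot\bigl[(k-1)(b+se)\bigr]$, i.e.\ the affine-replication output of the \emph{larger} seed $(k-1)(b+se)$ (which must be checked to be $(k-1)$-realizable, exactly as in Lemma~\ref{mainLemma}), not the number $(k+1)(b+se)$ itself, which lies far below the band.
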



\begin{proof}[Proof of Lemma \ref{lemma:main2}]
Beginning with an arithmetic sequence of $t$-configurations, we construct a consecutive sequence of $k$-configurations 
by iteratively applying a sequence of affine replications to go from $t$-configurations to $(k-1)$-configurations; a final affine 
replication to go from  $(k-1)$-configurations to $k$-configurations with a known number of lines in a parallel pencil; and finish 
by applying affine switch on that final sequence of $k$-configurations to produce bands of consecutive configurations. We then
analyze at what point we are guaranteed that the bands either are adjacent or overlap.

Specifically, starting with a sequence of $t$-realizable numbers $a, a+d, a+2d, \ldots$ we successively apply $k-t$ affine replications 
to the corresponding sequence of configurations to form sequences of $s$-realizable numbers for $t \leq s \leq k$:
\begin{align}
a, a+d, a+2d, \ldots  &\xrightarrow[(t+1)\text{-cfgs}]{ \AR(\cdot, t+1)} (t+2)a, (t+2)(a+d), (t+2)(a+2d), \ldots  \nonumber
\\
 &\xrightarrow[(t+2)\text{-cfgs}]{ \AR( \cdot, t+2)} (t+3)(t+2)a, (t+3)(t+2)(a+d), (t+3)(t+2)(a+2d), \ldots \nonumber \\
&\vdots \nonumber\\
&\xrightarrow[k\text{-cfgs}]{ \AR( \cdot, k)} \frac{(k+1)!}{(t+1)!}a, \frac{(k+1)!}{(t+1)!}(a+d), \frac{(k+1)!}{(t+1)!}(a+2d), \ldots  \label{eq:sequence-4}
\end{align}
By Lemma \ref{lemma:kp1}, each of the $k$-configurations corresponding to the realizable numbers in equation \eqref{eq:sequence-4} 
produced from a starting configuration $X$ has a pencil of  $\frac{k!}{(t+1)!}X$ parallel lines. To those configurations we apply 
the affine switch operation:
\begin{multline}
\frac{(k+1)!}{(t+1)!}a, \frac{(k+1)!}{(t+1)!}(a+d), \frac{(k+1)!}{(t+1)!}(a+2d), \ldots \\ 
\xrightarrow[k\text{-cfgs}]{ \AS(\cdot, k, \cdot)} \left[(k-1)\frac{(k+1)!}{(t+1)!}a+1: (k-1)\frac{(k+1)!}{(t+1)!}a+\frac{k!}{(t+1)!}q\right], \\ 
\left[(k-1)\frac{(k+1)!}{(t+1)!}(a+d)+1: (k-1)\frac{(k+1)!}{(t+1)!}(a+d)+\frac{k!}{(t+1)!}(a+d)\right], \ldots \label{eq:finalBands}
\end{multline}

As in the proof of Theorem \ref{mainTheorem}, observe that the $(n_{k})$ configurations described in \eqref{eq:sequence-4} 
all have $n$ a multiple of $\frac{(k+1)!}{(t+1)!}$. That is, any $n$ divisible by $\frac{(k+1)!}{(t+1)!}$ is $k$-realizable as long 
as when $n = \frac{(k+1)!}{(t+1)!}X$, $X$ is larger than $N^{R}_{t}$. We thus can extend our band of consecutive realizable 
configurations back one step, to be of the form
\[ \left[(k-1)\frac{(k+1)!}{(t+1)!}X: (k-1)\frac{(k+1)!}{(t+1)!}X+\frac{k!}{(t+1)!}X\right]\]
for a starting $t$-realizable number $X$. 

Successive bands of this form are guaranteed to either exactly meet or to overlap when the end of one band, plus one, 
equals or is greater to the beginning of the next, that is, when
\begin{align}
(k-1)\frac{(k+1)!}{(t+1)!}X+\frac{k!}{(t+1)!}X +1 &\geq (k-1)\frac{(k+1)!}{(t+1)!}(X+d) \implies \nonumber \\ 
X &\geq (k^{2}-1)d - \frac{(t+1)!}{k!}. \label{stupidInequality}
\end{align}

When $t = k-1$, $\frac{(t+1)!}{k!}  = 1$, while when $t <k-1$, $\frac{(t+1)!}{k!}  < 1$, and moreover, inequality \eqref{stupidInequality} 
holds as long as $X$ is greater than the bound on $t$-realizable configurations.
\end{proof}


We refine and improve the upper bounds of Table \ref{tab:theorem1bounds} with Theorem \ref{thm:main2}. 
This proof proceeds by showing, given a starting arithmetic sequence of consecutive $t$-configurations, 
a construction method for producing a sequence of consecutive $k$-configurations.

\begin{theorem}\label{thm:main2} Recursively define \[\hat{N}_{k} = (k^{2}-1)\min_{3 \leq t < k}\{ N(k, t, \hat{N}_{t}, 1)\}\]
with $\hat{N}_{3} = N_{3} = 9$ and $\hat{N}_{4}= N^{R}_{4}= 24$.
Then $\hat{N}_{k}$ is an upper bound for $N_k$. 
\end{theorem}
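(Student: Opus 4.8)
The plan is to prove Theorem~\ref{thm:main2} as an almost immediate corollary of Lemma~\ref{lemma:main2}, using induction on $k$ to guarantee that the quantities $\hat N_t$ appearing inside the recursive definition are legitimate upper bounds. First I would set up the induction: the base cases $k=3$ and $k=4$ hold by hypothesis, since $\hat N_3 = N_3 = 9 \geq N_3$ and $\hat N_4 = N^R_4 = 24 \geq N_4$. For the inductive step, fix $k \geq 5$ and assume that for every $t$ with $3 \leq t < k$ the number $\hat N_t$ is an upper bound for $N_t$; equivalently, for each such $t$ there is an arithmetic sequence of $t$-configurations $\hat N_t, \hat N_t + 1, \hat N_t + 2, \ldots$ (difference $d = 1$), so the hypotheses of Lemma~\ref{lemma:main2} are met with $a = \hat N_t$ and $d = 1$.

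Next I would simply invoke Lemma~\ref{lemma:main2}: for each admissible $t$ it tells us that $N(k, t, \hat N_t, 1) \geq N_k$. Since this holds for every $t$ in the range $3 \leq t < k$, the minimum over $t$ is still an upper bound, i.e.\ $\min_{3 \leq t < k} N(k, t, \hat N_t, 1) \geq N_k$. At this point one has to reconcile this with the stated formula for $\hat N_k$, which carries an extra factor of $(k^2 - 1)$ in front of the minimum. I would reconcile this by unwinding the definition of $N(k,t,a,d)$: when $t < k-1$ it already contains a leading $(k^2-1)$ and a factor $k!/(t+1)!$; when $t = k-1$ it contains a leading $(k^2-1)$. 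So I expect the cleanest route is to observe that $N(k, t, \hat N_t, 1) = (k^2-1) \cdot M(k,t,\hat N_t)$ where $M(k,t,a) = \frac{k!}{(t+1)!}\max\{a, (k^2-1)\}$ for $t < k-1$ and $M(k,k-1,a) = \max\{a, (k^2-1) - 1\}$, so that $\hat N_k = (k^2-1)\min_t M(k,t,\hat N_t) = \min_t N(k,t,\hat N_t,1) \geq N_k$. I would double-check that the ``$d=1$'' specialization of $N(k,t,a,d)$ indeed produces exactly the quantity written under the minimum in the theorem, so that the leading $(k^2-1)$ in the theorem statement is not a double-counting but rather the factor already present inside $N$.

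The main obstacle — really the only subtle point — is this bookkeeping reconciliation between the formula for $\hat N_k$ as displayed in Theorem~\ref{thm:main2} and the output $N(k,t,\hat N_t,1)$ of Lemma~\ref{lemma:main2}; one must be careful that the $(k^2-1)$ written explicitly in the theorem is the same $(k^2-1)$ sitting inside the definition of $N$, and that no factor has been dropped or duplicated. Once that is confirmed, the induction closes: $\hat N_k \geq N_k$, completing the proof. I would also remark that, because each $N(k,t,\hat N_t,1)$ is individually an upper bound, taking the minimum only improves the estimate, which is precisely the point of refining Lemma~\ref{mainLemma} (the case $t = k-1$) by also allowing smaller starting types $t$.
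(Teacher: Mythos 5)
Your proof is correct and takes essentially the same route as the paper: the paper likewise derives the theorem from Lemma~\ref{lemma:main2} applied with $a=\hat N_t$, $d=1$ for each admissible $t$ (the recursion supplying, by induction, the consecutive sequences of $t$-configurations), and then takes the minimum over $t$. Your resolution of the bookkeeping point is also the paper's: the leading $(k^2-1)$ in the displayed formula is the factor already inside $N(k,t,\hat N_t,1)$, so that effectively $\hat N_k=\min_{3\leq t<k} N(k,t,\hat N_t,1)=(k^2-1)\min_{3\leq t\leq k-1}\bigl\{\tfrac{k!}{(t+1)!}\max\{\hat N_t,\,k^2-1\}\bigr\}$, exactly the paper's ``unwinding definitions'' step (consistent with its table, e.g.\ $\hat N_5=576$).
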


\begin{proof}
Observe that by unwinding definitions, \[\hat{N}_{k} = (k^{2}-1) \min_{3 \leq t \leq k-1} \left\{ \frac{k!}{(t+1)!}\max\left\{\hat{N}_{t}, k^{2}-1\right\}\right\}.\]

By construction, since for each $\hat{N}_{k}$ we have shown there exists consecutive $k$-configurations for each $n \geq \hat{N}_{k}$, it follows that
$N_{k} \leq \hat{N}_{k}$, and the result follows.
\end{proof}

Applying Theorem \ref{thm:main2} results in the bounds for $N_{k}$ are shown in Table \ref{tab:thm2bounds-large}.

\begin{table}[htp]
\caption{Bounds on $N_{k}$ produced from Theorem \ref{thm:main2}. The values for $N^{R}_{k}$ given in this table agree with
the record values listed in Table \ref{tab:theorem1bounds} for all $k \leq 5$ (boldface), and are strictly better for  $k \geq 6$.}
\begin{center}

\begin{tabular}{c | r | l | l}
$k$ & $\hat{N}_{k} = N^{R}_{k}$ & formula & initial sequence \\ \hline
4 & {\bf 24} & - & -\\
5 & ${\bf 576}$ & $(5^{2}-1)^{2}$& $t = 4$\\
6 & $\numprint{7350}$ & $6(6^{2}-1)^{2}$ & $t= 4$\\
7 & $\numprint{96768}$ & $7\cdot6 \cdot (7^{2}-1)^{2}$ & $t = 4$ \\
8 & $\numprint{1333584}$ & $\frac{8!}{5!} (8^{2}-1)^{2}$ & $t = 4$ \\
9 & $\numprint{19353600}$ & $\frac{9!}{5!} (9^{2}-1)^{2}$ & $t = 4$ \\
10 & $\numprint{287400960}$ & $\frac{10!}{6!}\cdot \mathbf{ 576 }\cdot (10^{2}-1)$ & $\mathbf{t = 5}$ \\ 
11 & $\numprint{3832012800}$ & $\frac{11!}{6!}\cdot 576 \cdot (11^{2}-1)$ & $t = 5$ \\
$\vdots$ & & &  \\
24 & $\approx 2.85 \times 10^{26}$ & $\frac{24!}{6!}\cdot 576 \cdot (24^{2}-1)$ &  $t = 5$\\
25 & $\approx 8.39 \times 10^{27}$ & $\frac{25!}{6!}\cdot \mathbf{(25^{2}-1)^{2}}$ &  $t = 5$\\
26 & $\approx 8.02 \times 10^{30}$ &  $\frac{26!}{6!}\cdot (26^{2}-1)^{2}$ & $t = 5$\\
$\vdots$ & & & \\
32 & $\approx 3.82 \times 10^{38}$ & $\frac{32!}{6!}\cdot (32^{2}-1)^{2}$ & $t = 5$ \\
33 & $\approx 1.38 \times 10^{40}$ & $\frac{33!}{7!}\cdot \mathbf{7350} \cdot (33^{2}-1)$ & $\mathbf{t = 6}$ \\
$\vdots$ &&\\
85 & $\approx 2.97 \times 10^{132}$ &$\frac{85!}{7!}\cdot \mathbf{7350} \cdot (85^{2}-1)$& $t = 6$\\
86 & $\approx 2.63 \times 10^{134}$ &$\frac{86!}{7!} \cdot\mathbf{(86^{2}-1)^{2}}$& $t = 6$\\
$\vdots$ &&& \\
109&  $\approx 4.04 \times 10^{180}$&
$\frac{109!}{7!} (109^{2}-1)^{2}$
 &$t = 6$\\
110 & $\approx 4.61 \times 10^{182}$ & $\frac{110!}{8!}\cdot \frac{7!}{5!}\cdot (7^{2}-1)^{2} \cdot (110^{2}-1)$ & $\mathbf{t = 7}$
\end{tabular}
\end{center}
\label{tab:thm2bounds-large}
\end{table}%

There are some interesting things to notice about the bounds from Theorem \ref{thm:main2} shown in Table \ref{tab:thm2bounds-large}. 
First, note that $t = 3$ is never used in determining $\hat{N}_{k}$. Second, for example,
 the bound $\hat{N}_{10}$ uses an initial sequence of $5$-configurations, rather than starting with $4$-configurations. 
To understand why, observe that

\begin{align*}\hat{N}_{10} &= (k^{2}-1) \min_{3 \leq t \leq 9} \{ N(k, t, \hat{N}_{t}, 1)\}\\
& = 99 \min\biggl\{\frac{10!}{4!} \max\{\hat{N}_{3} = 9, 99\}, \frac{10!}{5!} \max\{\hat{N}_{4} = 24, 99\}, \frac{10!}{6!} \max\{\hat{N}_{5} = 576, 99\}, \\
& \phantom{===}\;\,\, \qquad \frac{10!}{7!} \max\{\hat{N}_{6}=7350, 99\}, \ldots, 
\frac{10!}{10!} \max\{\hat{N}_{9}, 99\}\biggr\}\\
&= 99 \min\left\{\frac{10!}{4!} 99, \frac{10!}{5!} 99, \frac{10!}{6!} 576, \frac{10!}{7!} \hat{N}_{6}, \ldots,  \hat{N}_{9}\right\}
\end{align*}

Since $ 6 \cdot 99 > 576$ (and the values $\hat{N}_{t}$ for $ 6\leq t \leq 9$ much larger than either), the minimum of that list is 
actually $\frac{10!}{6!}576$, and the computation for $\hat{N}_{10}$ starts with the sequence of consecutive $5$-configurations 
$(576_{5}), (577_{5}), \ldots$ 
rather than with $(24_{4}), (25_{4}), \ldots$.
%
%
Sequences with $t = 5$ begin to dominate when $6(k^{2}-1) > 576 = (5^{2}-1)^{2}$; that is, when $k \geq \lceil\sqrt{97} \rceil = 10$. 
Sequences with $t = 6$ begin to dominate when $7(k^{2}-1) > 6(6^{2}-1)^{2} = 7350$, or $k \geq \left\lceil\sqrt{1051}\right\rceil = 33$.
Sequences with $t = 7$ will dominate when $8(k^{2}-1) >7\cdot6 \cdot (7^{2}-1)^{2}$, that is $k \geq \lceil\sqrt{12097}\rceil = 110$. 
However, note that these bounds are absurdly large;  $\hat{N}_{110} \approx 4.6 \times 10^{182}$.

In addition, observe that since $k = 25$ is the smallest positive integer satisfying $k^{2}-1>576$, the bounds for $\hat{N}_{25}$ 
use the $25^{2}-1$ choice rather than $\hat{N}_{5}$ in taking the maximum, even though both $\hat{N}_{24}$ and $\hat{N}_{25}$ 
are starting with the same initial sequence of $5$-configurations, and there is a similar transition again at $k = 86$, when the function is using $6$-configurations to produce the maximum. At this position, since $85^{2} - 1 = 7224$ and $86^{2}-1 = 7395$, $\hat{N}_{85}$ uses $\hat{N}_{6} = 7350$, but $\hat{N}_{86}$ transitions to using $86^{2} - 1$ to compute the maximum.  

\section{Future work}


With better bounds  $N^R_{t}$ developed experimentally for small values of $t$, in the same way that $N^R_{4} = 24$ 
has been determined experimentally, we anticipate significantly better bounds $N^{R}_{k}$, for $k > t$, without changing 
the methods for obtaining the bounds. 

One obvious approach is to improve the bookkeeping even further. For instance, in Theorem \ref{thm:main2} we only used 
arithmetic sequences with $d = 1$ in $N(k,t,a,d)$ and ignoring any existing configuration $(m_t)$ for $m < N_t$. In particular, 
for $t = 4$, we could have used $N(k,4,18,2)$ since $18,20,22,24, \ldots $ form an arithmetic sequence of $4$-realizable numbers. 
Our experiments indicate that this particular sequence has no impact in improving the bounds. However, by carefully keeping track 
of the existing $t$-configurations below $N^R_t$, other more productive arithmetic sequences may appear.  

Another approach is to sharpen the bounds for $N_k$, for general $k$. This can be achieved, for instance, by generalizing some other 
``Gr\"unbaum Calculus'' operations, which we plan for a subsequent paper. We also plan to apply several ad hoc constructions for 
$5$- and $6$-configurations to further sharpen the bound for $N_{5}$ and $N_{6}$, which will, in turn,  lead to significantly better 
bounds for $N_{k}$ for higher values of $k$. However, based on the work involved in bounding $N_{4}$ and the fact that $N_{4}$ 
is not currently known (and on how hard it was to show the nonexistence of an $(19_{4})$ configuration), we anticipate that even 
determining $N_{5}$ exactly is an extremely challenging problem.

Finally, very little is known about existence results on \emph{unbalanced} configurations, that is, configurations $(p_{q}, n_{k})$ 
where $q \neq k$. While some examples and families are known, it would be interesting to know any bounds or general results on 
the existence of such configurations.

\section*{Acknowledgements}
G\'abor G\'evay's research is supported by the Hungarian National Research, Development and Innovation Office, OTKA grant No.\ SNN 132625.
Toma\v{z} Pisanski's research is supported in part by the Slovenian Research Agency (research program P1-0294 and research projects 
N1-0032, J1-9187, J1-1690, N1-0140, J1-2481), and in part by H2020 Teaming InnoRenew CoE. 

\bibliographystyle{plain}
\bibliography{Bounds_2020}

\begin{thebibliography}{10}

\bibitem{BokPil2015}
J{\"u}rgen Bokowski and Vincent Pilaud.
\newblock On topological and geometric $(19_4)$ configurations.
\newblock {\em European J.\ Combin.}, 50:4--17, 2015.

\bibitem{BokPil2016}
J{\"u}rgen Bokowski and Vincent Pilaud.
\newblock Quasi-configurations: building blocks for point-line configurations.
\newblock {\em Ars Math.\ Contemp.}, 10(1):99--112, 2016.

\bibitem{BokSch2013}
J{\"u}rgen Bokowski and Lars Schewe.
\newblock On the finite set of missing geometric configurations $(n_4)$.
\newblock {\em Comput.\ Geom.}, 46(5):532--540, 2013.

\bibitem{Cox1969}
Harold Scott~MacDonald Coxeter.
\newblock {\em Introduction to {G}eometry}.
\newblock Wiley, New York, 2nd ed. edition, 1969.

\bibitem{Cun2018}
Michael Cuntz.
\newblock $(22_4)$ and $(26_4)$ configurations of lines.
\newblock {\em Ars Math.\ Contemp.}, 14:157--163, 2018.

\bibitem{Gru2000}
Branko Gr{\"{u}}nbaum.
\newblock Connected {$(n_4)$} configurations exist for almost all {$n$}.
\newblock {\em Geombinatorics}, 10(1):24--29, 2000.

\bibitem{Gru2000b}
Branko Gr{\"{u}}nbaum.
\newblock Which {$(n_4)$} configurations exist?
\newblock {\em Geombinatorics}, 9(4):164--169, 2000.

\bibitem{Gru2002}
Branko Gr{\"{u}}nbaum.
\newblock Connected {$(n_4)$} configurations exist for almost all {$n$}---an
  update.
\newblock {\em Geombinatorics}, 12(1):15--23, 2002.

\bibitem{Gru2006}
Branko Gr{\"{u}}nbaum.
\newblock Connected {$(n_4)$} configurations exist for almost all
  {$n$}---second update.
\newblock {\em Geombinatorics}, 16(2):254--261, 2006.

\bibitem{Gru2008a}
Branko Gr{\"u}nbaum.
\newblock Musings on an example of {D}anzer's.
\newblock {\em European J.\ Combin.}, 29(8):1910--1918, 2008.

\bibitem{Gru2009b}
Branko Gr{\"u}nbaum.
\newblock {\em Configurations of {P}oints and {L}ines}, volume 103 of {\em
  Graduate Studies in Mathematics}.
\newblock American Mathematical Society, Providence, RI, 2009.

\bibitem{GruRig1990}
Branko Gr{\"u}nbaum and John~F.\ Rigby.
\newblock The real configuration $(21_4)$.
\newblock {\em J.\ London Math.\ Soc.}, 41:336--346, 1990.

\bibitem{PisSer2013}
Toma{\v z} Pisanski and Brigitte Servatius.
\newblock {\em Configurations from a {G}raphical {V}iewpoint}.
\newblock Birkh{\"a}user Advanced Texts. Birkh{\"a}user, New York, 2013.

\end{thebibliography}
\end{document}